\newtheorem{theorem}{Theorem}[section]
\newtheorem*{mainthm*}{\hypertarget{thm:DRclosure}{\bf Main theorem}}
\newtheorem{theorem+definition}{Theorem\,+\,Definition}[theorem]
\numberwithin{case}{theorem}
\newtheorem*{claim*}{Claim}
\newtheorem{lemma}[theorem]{Lemma}
\newtheorem{proposition}[theorem]{Proposition}
\newtheorem{def+prop}[theorem]{Definition\,+\,Proposition}
\theoremstyle{definition}
\newtheorem{definition}[theorem]{Definition}
\theoremstyle{remark}
\newtheorem{remark}[theorem]{Remark}
\newtheorem{example}[theorem]{Example}
\DeclareMathOperator{\CC}{\mathbb{C}}
\DeclareMathOperator{\PP}{\mathbb{P}}
\DeclareMathOperator{\ZZ}{\mathbb{Z}}
\DeclareMathOperator{\mult}{mult}
\DeclareMathOperator{\im}{Im}
\DeclareMathOperator{\ord}{ord}
\DeclareMathOperator{\ANN}{Ann}
\DeclareMathOperator{\ev}{ev}
\newcommand\calO{\mathcal{O}}
\newcommand\calC{\mathcal{C}}
\newcommand\calM{\mathcal{M}}
\newcommand\calH{\mathcal{H}}
\newcommand\Adm[1][(\mu)]{\operatorname{Adm}_{g,n}#1 }
\newcommand\Hur[1][(\mu)]{\operatorname{Hur}_{g,n}#1 }
\newcommand{\Mgn}[1][g,n]{\calM_{#1}}
\newcommand{\Mgnbar}[1][g,n]{ \overline\calM_{#1}}
\newcommand{\Cgn}[1][g,n]{\calC_{#1}}
\newcommand{\HB}{E}
\newcommand{\PHB}[1][]{\PP_{#1}(\HB)}
\newcommand{\Stra}[1][(\mu)]{\calH {#1}}
\newcommand{\DR}[1][(\ptn)]{\operatorname{DR}_g \!{#1}}
\newcommand{\DRg}[2][g]{\operatorname{DR}_{#1}{#2}}
\newcommand\dr{double ramification locus\xspace}
\newcommand\drs{double ramification loci\xspace}
\newcommand\twr{twistable rational function\xspace}
\newcommand\twrs{twistable rational functions\xspace}
\newcommand\twdr{twisted rational function\xspace}
\newcommand\twdrs{twisted rational functions\xspace}
\newcommand\TWR{(\stC,\umS,\stf)}
\newcommand\TWDR{(\psC,\umSC,\psf)}
\newcommand{\DRD}[1][(\ptn)]{\widetilde{\operatorname{DR}}_g{#1}}
\newcommand{\DRDc}[1][(\eptn)]{\widetilde{\operatorname{DR}}_g^c{#1}}
\newcommand{\DRc}[1][(\eptn)]{\operatorname{DR}_g^c{#1}}
\newcommand{\PStra}[1][(\mu)]{\PP\!\Stra[#1]}
\newcommand{\MSDS}[1][(\mu)]{\Xi\Mgnbar {#1}}
\newcommand{\LVLF}[1][\leq i]{L_{#1}}
\newcommand{\psC}{\widetilde{X}}
\newcommand{\stC}{X}
\newcommand{\stf}{f}
\newcommand{\psf}{\tilde{f}}
\newcommand{\exD}{d\stf}
\newcommand{\twD}{\eta}
\newcommand{\rleq}[2][i]{{#2}_{(\leq #1)}}
\newcommand{\req}[2][i]{{#2}_{(#1)}}
\newcommand{\pa}{\gamma}
\newcommand\mS[1][k]{x_{#1}}
\newcommand\mZ[1][k]{z_{#1}}
\newcommand\mC[1][k]{c_{#1}}
\newcommand\mSC[1][k]{\tilde{x}_{#1}}
\newcommand\mN[1][e]{q_{#1}}
\newcommand\umS{\mathbf{x}}
\newcommand\umP{\mathbf{p}}
\newcommand\umZ{\mathbf{z}}
\newcommand\umC{\mathbf{c}}
\newcommand\umSC{\mathbf{\tilde{x}}}
\newcommand\umN{\mathbf{\mN[]}}
\newcommand{\dG}[1][\Gamma]{#1}
\newcommand{\lG}[1][\Gamma]{\overline{\dG[#1]}}
\newcommand\ptn{\mu}
\newcommand\dptn{\mu'}
\newcommand\eptn{\widetilde{\mu}}
\tikzset{
  symbol/.style={
    draw=none,
    every to/.append style={
      edge node={node [sloped, allow upside down, auto=false]{$#1$}}}
  }
}
\newlength{\mylen}
\numberwithin{equation}{section}
\title{The closure of double ramification loci via strata of exact differentials}
\author{Frederik Benirschke}
\address{Mathematics Department, Stony Brook University,
Stony Brook, NY 11794-3651, USA}
\email{frederik.benirschke@stonybrook.edu}
\begin{document}
\maketitle
\begin{abstract}
Double ramification loci, also known as strata of $0$-differentials, are algebraic subvarieties of the moduli space of smooth curves parametrizing Riemann surfaces such that there exists a rational function with prescribed ramification over $0$ and $\infty$. We describe the closure of double ramification loci inside the Deligne-Mumford compactification in geometric terms.

To a rational function we associate its exact differential, which allows us to realize double ramification loci as linear subvarieties of strata of meromorphic differentials. We then obtain a geometric description of the closure using our recent results on the boundary of linear subvarieties. Our approach yields a new way of relating the geometry of loci of rational functions and Teichm\"uller dynamics. We also compare our results to a different approach using admissible covers.
%
\end{abstract}

\section{Introduction}
Inside the moduli space $\Mgn$ of pointed smooth curves $(\stC,\umS)$ of genus $g$ with $n$ marked points $ \umS=(\mS[1],\ldots,\mS[n])$ for any integer $k\geq 0$ there exist natural closed subvarieties
\[
\calH^{k}_g(\ptn):=\left\{(X,\umS)\,:\, \calO_X\left(\sum_{i=1}^{n}\mu_i\mS[i]\right)\simeq\omega^{\otimes k}_X\right\}\subseteq \Mgn,
\]
where $\ptn=(\mu_1,\ldots,\mu_n)\in\ZZ^n$ is an integer partition of $k(2g-2)$. 

For $k\geq 1$, the above condition is equivalent to the existence of a meromorphic $k$-differential on $\stC$ with prescribed vanishing order at the marked points. In this case $\calH^{k}(\ptn)$ is called a {\em stratum of meromorphic $k$-differentials of type $\mu$} and they have been studied extensively, especially from the viewpoint of Teichm{\"u}ller dynamics. See for example the surveys \cite{ChenSurvey, WrightSurvey, ZorichSurvey}.
Strata of $k$-differentials are not compact, and finding geometrically meaningful compactifications is an active area of research.
One possible way of compactifying the stratum $\calH^{k}_g(\ptn)\subseteq \Mgn$ is by simply taking the closure inside $\Mgnbar$.  The closure of $\calH^{k}_g(\ptn)$ in $\Mgnbar$ has been determined for $k=1$ in \cite{BCGGMgrc} in terms of {\em twisted differentials}, and for $k>1$ in \cite{BCGGMk}, by using a covering construction to reduce it to the $k=1$ case.
In \cite{BCGGMsm} the authors use the explicit description of the closure of $\calH^{k}_g(\ptn)$ to construct the {\em moduli space of multi-scale differentials} $\MSDS$: a smooth, modular compactification of the stratum $\calH^1_g(\ptn)$, which has been extended to $k> 1$ in \cite{ecStrata}.

In the case $k=0$, the isomorphism $ \calO_X\left(\sum_{i=1}^{n}\mu_i\mS[i]\right)\simeq\calO_X$ is equivalent to the existence of a rational function $f:X\to \PP^1$ with prescribed ramification  over $0$ and $\infty$ with the preimages of $0$ and $\infty$  being all marked points $x_k$ such that $\mu_k>0$ and $\mu_k<0$, respectively. The subvariety $\DR:=\calH_g^{0}(\ptn)$ of $\calM_{g,n}$ is called a {\em \dr} or a {\em stratum of $0$-differentials}. Equivalently, \drs can be defined as the pullback of the zero-section from the universal Jacobian under the Abel-Jacobi map. In 2001 Eliashberg posed the problem of extending double ramification cycles to $\Mgnbar$ for the development of symplectic field theory \cite{SymplField}.
Since then various different ways of extending the cycle class of $\DR$ to $\Mgnbar$ have been studied in the literature. One possible extension is via relative Gromov-Witten theory using the space of rubber maps to $\PP^1$, see \cite{Li1,Li2,GV}.
A different approach to extending \drs is to extend the Abel-Jacobi map.
The Abel-Jacobi naturally extends to curves of compact types. The corresponding cycle class was computed by Hain in \cite{Hain} and the computation was later simplified in \cite{GZ}. However, in general the Abel-Jacobi map does not extend to  all of $\Mgnbar$; nonetheless different extensions of \drs to $\Mgnbar$ using the Abel-Jacobi map have been proposed, see for example \cite{HKPDR,HolmesDR,DRLog}. Another extension has been proposed in \cite{FarkasPandharipande}.
There is one natural cycle class on $\Mgnbar$ that arises in these approaches, and a conjectural formula for the class in the tautological ring of $\Mgnbar$ was proposed by Pixton and later proved in \cite{DRFormula}. We stress that the proposed extensions do not coincide with the cycle class of the closure of $\DR$ in $\Mgnbar$ but rather contain additional contributions from the boundary.


One way of finding  a geometric description of the closure of $\DR$ is using the theory of admissible covers, first introduced by \cite{HM}. The space of admissible covers $\Adm$ is a proper Deligne-Mumford stack compactifying the space of maps $X\to\PP^1$ with a fixed ramification profile $\sigma$. We will recall admissible covers in more detail in \Cref{sec:Adm}. Contributions to the intersection  theory of admissible cover have been made in \cite{SchmittvanZelm, LianHtautological}.
Using $\Adm$ a simple description of the boundary of double ramification loci is as follows: a stable curve $(X,\umS)\in \Mgnbar$ is contained in $\overline{\DR}$ if and only if there exists a suitable admissible cover on a semistable model $X'$ of $X$. While certainly geometric, the existence of an admissible cover on a semistable model is hard to verify in practice due to the combinatorial complexity. In particular it would be useful to have a description solely in terms of the given stable curve $X$.

To obtain a more explicit geometric criterion in this paper we take a different approach, similar in spirit to the approach for $k\geq 1$ in \cite{BCGGMgrc, BCGGMk} by relating double ramification loci to strata of meromorphic differentials. By associating to a rational function $f:\stC\to\PP^1$ the exact differential $df$ we can use the moduli space of multi-scale differentials $\MSDS$, constructed in \cite{BCGGMsm}, to understand degenerations of $df$. This gives a new way of relating  double ramification loci to strata of differentials. Since exact differentials are described by the vanishing of all absolute periods, this requires an analysis of periods near the boundary of $\MSDS$. The results obtained in \cite{Fred} and further refined in \cite{BDG} describe the boundary   of subvarieties of strata of meromorphic differentials given by linear equations on periods in $\MSDS$, and can thus be applied to the case of exact differentials.

We now define the key notions, which will allow us to state the main result.
\subsection*{Twistable rational functions}
Let $\dG$ be the dual graph of a marked stable curve.
We consider $\dG$ as a triple $(V,E,H)$ where $V$ are the  vertices, $E$ are the edges and $H$  are the {\em legs} corresponding to the marked points. We denote $h_k$ the leg corresponding to the marked point $\mS$.

Recall from \cite{BCGGMgrc} that a {\em level graph} $\lG$ is a dual graph $\dG$ of a stable curve together with a level function $\ell:V(\dG)\to \ZZ_{\leq 0}$. A level function induces a partial order on the set of vertices by setting  $v \succcurlyeq v'$ if $\ell(v)\geq\ell(v')$. Here we understand partial ordering in a weak sense and allow $\ell(v)=\ell(v')$ even if $v\neq v'$.

\begin{definition}\label{def:twr}
Let $\ptn$ be a partition of zero as above, and $\lG$ a level graph.
A {\em \twr} $(\stC,\umS,\stf)$ of type $\ptn$ compatible with $\lG$ on a stable pointed curve $(\stC,\umS)$ is a collection $\stf=(\stf_v)_{v\in V(\dG)}$ such that $f_v:\widetilde{X}_v\to\PP^1$ is a rational function on the normalization $\tilde{X}_v$ of each irreducible component $X_v$ of $X$, satisfying
\begin{enumerate}
\item ({\bf Prescribed order of vanishing}) Each rational function $f_v$ is holomorphic  away from the marked points and nodes. Furthermore,
\[
\ord_{\mS}\stf = \ptn_k \text{ for all } k.
\]
If a component $X_v$ contains any marked {\em zero} $x_k$ of $f_v$, then all zeros of $f_v$ are at marked points or nodes.

\item ({\bf Matching order at nodes}) If a node of $\stC$ identifies $\mN[1]\in \tilde{\stC}_{v_1}$ with $\mN[2]\in\tilde{\stC}_{v_2}$.
Then
\begin{equation}\label{eq:TWR}
\begin{aligned}
 \ord_{q_1} (d\stf_{v_1}) + \ord_{q_2} (d\stf_{v_2})\geq -2
\end{aligned}.
\end{equation}

\item
({\bf Compatibility with the level graph}) If furthermore $q_2$ is a pole of $f_{v_2}$, i.e. $\ord_{q_2} \stf< 0$, then $\ell(v_1)> \ell(v_2)$.
\end{enumerate}
\end{definition}

Some remarks about this definition are in order.
We note that condition $(2)$  is vacuous if $\ord_{q_1} \stf\geq 0$ and $\ord_{q_2} \stf\geq 0$. If $\ord_{q_2} \stf <0$ then it can be rephrased as
\[
\mult_{q_1} f_{v_1} \geq \mult_{q_2} f_{v_2},
\]
where $\mult_q f_v-1$ is the ramification index of the map $f_v:X_v\to \PP^1$ at $q$.

\begin{remark} \label{rem:unmarked}
We stress that while all poles of $\stf$ are either at a marked point $\umS$ or at a node, there can be zeros of $\stf$ which are not marked and not at a node.
There are thus two kinds of irreducible components depending on whether or not they contain marked zeros of $\stf_v$.   On an irreducible component containing marked zeros, the pointed curve $(\tilde{X}_v,(\umS_v,\umN_v))$, where $\umS_v$ and $\umN_v$ are all the marked points and nodes contained in $\tilde{X}_v$, is contained in a double ramification locus $\DRg[g(X_v)](\ptn_v)$ for a partition $\ptn_v$ depending only on $\ptn$ and the level graph $\lG$ and the orders of vanishing of $f$ at the nodes. On the remaining components, which do not contain any marked zero of $f_v$, we instead have a rational function with conditions on the ramification indices at the nodes.
\end{remark}

\subsection{The evaluation morphism of a \twr}\label{sec:ev}
The definition of \twrs so far is similar to the definition of a twisted differential in the sense of \cite{BCGGMgrc}, except that in $(2)$ we only have inequality. We now introduce the crucial new ingredient that will allow us to determine whether a \twr can be smoothed to a rational function contained in a \dr.

Let $(\stC,\umS,\stf)$ be a \twr of type $\ptn$ compatible with a level graph $
\lG$. We partition $\umS=(\umZ,\umP)$ where all marked points in $\umZ$ and $\umP$  are zeros and poles  of $\stf$, respectively.
From now on we usually assume that  $\mu_i\neq 0$ at all marked points. All our results can be extended to the case where $\mu_i=0$ at some marked point.  In this case one need to include all marked points with $\mu_i=0$ in $\umZ$.
We consider the dual graph $\lG$ with legs corresponding to the marked points $\umS$ as a $1$-dimensional cell complex. By abuse of notation we call $\mS$ the endpoint of the leg $h(\mS)$ and we can thus consider $\umS$ as a subset of $\lG$.

For a given level $i\in\ZZ_{\le 0}$ we denote $\rleq{\lG}$ the subgraph consisting of all vertices $v$ such that $\ell(v)\leq i$, together with all edges between them.

We define $\req{\lG}$ similarly, except that for every edge $e$ connecting a vertex $v$ of level $i$ and a vertex of level lower than $i$,  we attach an additional leg $h(\mN^+)$ to $v$. Pictorially, we cut each edge connecting a vertex of level $i$ to a vertex of lower levels in the middle.

We say a relative homology class $[\gamma]\in H_1(\lG,\umZ;\ZZ)$  has {\em top level} at most $i$ if it can be represented by a collection of paths $\gamma$ contained in $\rleq{\lG}$. We then  let $\LVLF(\lG)\subseteq H_1(\lG,\umZ;\ZZ)$  be the subspace of all homology classes of level at most $i$. We can now describe the {\em evaluation morphism}
\[
\ev_{\stf}^{(i)}:\LVLF(\lG)\to \CC
\]
of level $i$ as follows.
Suppose that $[\gamma]\in \LVLF(\lG)$ is represented by a collection of paths $\gamma$ contained in $\rleq{\lG}$.
We then restrict $\gamma$ to $\req{\lG}$ and evaluate $f$, with signs, at the endpoints of the restriction.
For example, if the restriction of $\gamma$ connects $q_1^+$ and $q_2^+$, as depicted in \Cref{fig:evmap}, then we have
\[
\ev_{\stf}^{(0)}([\gamma])=f(q_2^+)-f(q_1^+)\,,
\]
and if the restriction of $\gamma$ is a closed path then we have $\ev_{\stf}^{(i)}([\gamma])=0$.
\begin{figure}
\includegraphics[scale=1.3]{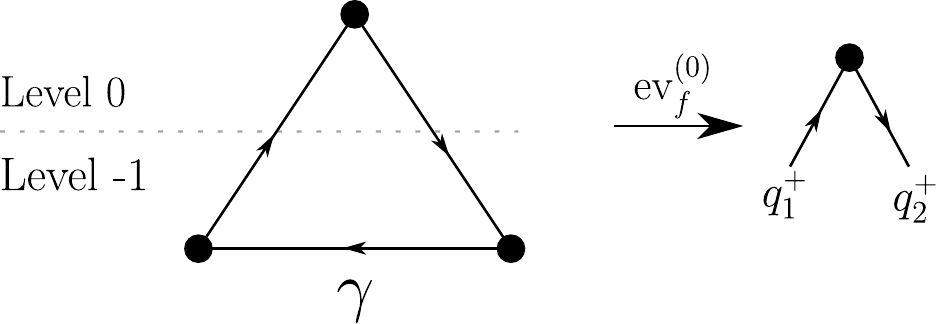}
\caption{The evaluation map}
\label{fig:evmap}
\end{figure}
We will revisit the evaluation map in more detail in \Cref{sec:evmap} and in particular check that it is well-defined in \Cref{prop:evwell}. We can now state our main result.

\begin{mainthm*} Let $(X,\umS)$ be a stable curve in $\Mgnbar$ with dual graph $\dG$. Then $(X,\umS)$ is contained in the closure of $\DR$ if and only if the following conditions are satisfied:
\begin{enumerate}
\item There exists a level graph structure $\lG$ on $\dG$ and a \twr $(X,\umS,\stf)$ of type $\ptn$ compatible with $\lG$.
\item  The evaluation morphism $\ev_{\stf}^{(i)}$ vanishes identically  for all levels $i$.
\end{enumerate}
\end{mainthm*}

Being a \twr with vanishing evaluation morphism is a condition that can be verified explicitly. On each irreducible component one has a rational function satisfying a condition similar to double ramification loci, and the vanishing of the evaluation morphism is a combinatorial condition relating the values of the rational functions on different irreducible components.

In \Cref{sec:ex} we discuss several examples of \twrs highlighting the features of the evaluation morphism, showing the explicit nature of \twrs. The Abel-Jacobi map extends to curves of compact type and thus degenerations of the \dr in this case are well understood. Furthermore, in \cite{GZ} the authors extend the Abel-Jacobi map to curves with one non-separating node and use it do describe the closure of \dr.
Our techniques allow us to describe arbitrary degenerations. As a demonstration, going beyond what has been studied in the literature,  in \Cref{ex:Dollar} we study \twrs on dollar curves, i.e. stable curves with two irreducible components meeting transversely at three nodes. We discuss the possible level graphs and the different types of conditions that the vanishing of the evaluation morphism imposes.

At first glance the notion of a twistable rational function is very similar to the notion of twisted differentials in \cite{FarkasPandharipande} and \cite{BCGGMgrc}.
A twisted differential is a collection of  meromorphic differential forms on each irreducible component. In particular, it determines a unique partial order on the dual graph by comparing the vanishing orders at the nodes.
In \cite{BCGGMgrc} the authors introduce the global residue condition for twisted differentials, which is the crucial ingredient that allows to smooth out a twisted differential. The global residue condition does depend not only the partial ordering but on the actual level graph, as can be seen in \cite[Ex 3.4]{BCGGMgrc}.
The situation is somewhat different for \twrs.
In \Cref{ex:paOrder} we will see that a \twr does not determine a unique partial ordering, i.e. there exist two different partial orders on the dual graph of the same underlying \twr both satisfying Condition $(3)$ in \Cref{def:twr}.
The level graph only comes into play when considering the evaluation morphism. There are several cases where a fixed partial ordering on the dual graph of a \twr uniquely determines the level graph (up to automorphism) and thus also determines the conditions imposed by the vanishing of the evaluation morphism, for example if every irreducible component contains a marked zero of $\stf$.
On the other hand in \Cref{ex:lvldep} we construct an example of two different level graphs, inducing the same partial ordering, for which the vanishing of the  evaluation morphism imposes different conditions.

\begin{remark}({\em Prestable and stable curves})
For technical reasons, we will have to additionally mark the critical points of the rational functions during the proof of the Main theorem. By forgetting the marked critical points we are then naturally forced to consider prestable models of a given stable curves. A combinatorial observation (\Cref{lemma:LocMax}) allows us to state our results only in terms of the stable curve without alluding to a prestable model. On the other hand for admissable covers it seems hard in general to give a criterion for when a given stable curve admits an admissable cover on a semistable model.
\end{remark}

\subsection*{Potential applications}
The determination of the closure of strata inside the Hodge bundle via twisted differentials in \cite{BCGGMgrc} allowed to study the geometry of strata of differentials with algebro-geometric methods. For example, in \cite{QuentinThesis} the author obtains first information about the Kodaira dimension of strata using twisted differentials. In \cite{MUWReal} the authors solve the realization problem for tropical canonical divisors using the smoothability of twisted differentials.
Using the smoothability of \twrs, in forthcoming work we plan to study similar questions for the \dr.
Additionally, the description of the closure of strata was used in \cite{BCGGMsm} to construct a smooth, modular compactification of strata, and one would like to apply similar methods to construct a  smooth, modular compactification of \dr, different from the normalization of the stack of admissable covers, as described in \cite{AVC}.

\subsection*{Outline of the proof}
The key idea to prove the \hyperlink{thm:DRclosure}{Main theorem}  is to consider exact differentials instead of rational functions. On the \dr $\DR$ only the zeros and poles of $\stf$ are marked, but not the remaining zeros of $d\stf$, which correspond to the critical points of $\stf$. In \Cref{sec:exdiff},  we construct analogs of $\DR$, which we denote $\DRc$, by additionally marking the critical points.
The advantage of using $\DRc$ is that the associated exact differential is contained in a fixed stratum. Furthermore, being an exact differential is equivalent to the vanishing of all absolute periods, a condition that is linear in period coordinates of the stratum.
We can then describe the boundary of $\DRc$ in terms of {\em \twdrs} using the main result from \cite{Fred} and some further consequences from \cite{BDG}.
A \twdr is a \twr where we additionally mark all the critical points of the rational function, and such that the associated exact differential is a twisted differential in the sense of \cite{BCGGMgrc}.

In \Cref{prop:BDDRc} we then show that the boundary of $\DRc$  can be described in terms of \twdrs with vanishing evaluation morphism.

So far we have artificially marked the critical points of $\stf$ and now we need to forget them again.
It then only remains to study the relationship between \twdrs and \twrs.
In \Cref{sec:twisting} we show that \twrs arise exactly  by forgetting the critical points of a \twdr and then stabilizing the resulting prestable curve, which is then enough to prove the main theorem.

\subsection*{Acknowledgments}
I would like to thank my advisor Samuel Grushevsky for numerous useful discussions regarding this project. Furthermore, I would like to thank Johannes Schmitt for comments on a preliminary draft.

\section{Exact differentials and \twdrs}\label{sec:exdiff}

\subsection*{Level graphs}
We recall some additional notation for level graphs from \cite{BCGGMgrc}.
 An edge of $\lG$ is called {\em horizontal} if it joins vertices of the same level, and {\em vertical} otherwise.
If an edge $e$ joins the vertices $v$ and $v'$ such that $\ell(v)\geq \ell (v')$ then we let $\ell(e+)$ be the level of $v$ and similarly $\ell(e-)$ the level of $v'$. Furthermore we set $v(e+):=v$ and $v(e-):=v'$. Similarly, we denote $q_e^+$ and $q_e^-$ the preimages of the node $e$ on $v(e+)$ and $v(e-)$ respectively. At horizontal nodes we make a random choice. Similarly, for a leg $h$ we let $v(h)$ be the vertex connected to $h$ and $\ell(h)$ be the level of $v(h)$.

\subsection*{Moduli spaces of exact differentials}

Let $X$ be a stable curve with dual graph $\dG$ and $f=(f_v)_{v\in V(\dG)}$ a collection of rational functions on the irreducible components of $X$. To $f$ we can associate the collection of meromorphic differentials $df=(d(f_v))_{v\in V(\dG)}$.
We have
\begin{equation}\label{eq:orddf}
\ord_{x} (\exD)= \begin{cases} \mult_x f-1 & \text{ if } x\notin f^{-1}(\infty) \\
-\mult_x f-1 & \text{ if } x\in f^{-1}(\infty).
\end{cases}
\end{equation}
where $\mult_x \stf$ is the ramification index of the map $\stf_v:X_v\to\PP^1$ at $x$.
To study degenerations of $f$ we want to use the compactification of strata of meromorphic differentials given by multi-scale differentials. In order to ensure that $df$ is contained in a fixed stratum we need to control the zero and pole order of $\stf$ as well as the ramification.

We define the projectivized Hodge bundle
\[\PHB:=\PP\left(\pi_*\omega_{\Cgn/\Mgn}\left(\sum_{\mu_k <0}\left(-\mu_k\mS\right )\right )\right),\]
 where $\pi:\Cgn\to\Mgn$ denotes the universal curve.
Since exact differentials are exactly the meromorphic differentials with zero absolute periods, we can identify $\DR$ with a locally closed subset of $\PHB$ as follows.
We define a new partition $\dptn $ by setting  $\dptn_k:= \ptn_k-1$
and call $\dptn$ the partition {\em associated} to $\ptn$. Then the locally closed subset
\[
\DRD:=\left\{(X,\umS,\omega)\,:\,  \int_{\pa} \omega=0\,\forall \pa\in H_1(X\setminus \umP,\umZ;\ZZ),\, \ord_{\mS}\omega=\dptn_k \text{ for all } k\right\}\subseteq \PHB
\]
is isomorphic to $\DR$.

\subsection*{Marking the critical points}
We will need analogs of $\DRD$ where all critical points are marked and the zero order of $df$ is prescribed at all zeros, nodes and critical points of $f$.
Let $\TWR$ be a \twr. We denote $\umC$ the remaining critical points of $\stf$ that are neither zeros, poles or nodes, and set
\[
\umSC=(\umZ,\umP,\umC).
\]
We stress that a marked point is always a smooth point of $\psC$, thus $\umS$ consists of zeros and poles of $f$ that are not nodes, and $\umC$ consists of  all critical points of $\psf$ that are not zeros or poles of $f$ or nodes.

We fix a partition $\ptn$ of zero of length $n$. We say a partition $\eptn$ of $2g-2$ {\em extends} $\ptn$ if it can be written as  $\eptn=(\dptn,\dptn')$ such that $\dptn$ is the partition associated to $\ptn$ and all entries of $\dptn'$ are positive. We denote the length of $\eptn$ by $n'$.
Let
\[
\DRDc:=\left\{(X,\umSC,\omega)\,:\,  \int_{\pa} \omega=0\,\forall \pa\in H_1(X\setminus \umP,\umZ;\ZZ),\, \ord_{\mSC}\omega=\eptn_k \text{ for all } k\right\}\subseteq \PStra[(\eptn)]
\]

We similarly let $\DRc$ be the image of $\DRDc$ under the forgetful map $\PStra[(\eptn)]\to \Mgn[g,n']$.

We now define a notion analogous to \twr, taking into account also the total ramification of $\stf$.
\begin{definition}
Let $\eptn$ be a partition extending $\mu$. A  pointed stable curve $\TWDR$ with a collection $d\psf=(d\psf_v)_{v\in V}$ of rational functions is called a {\em \twdr} if $d\psf$ is a twisted differential of type $\eptn$ compatible with $\lG$ in the sense of \cite{BCGGMgrc}.
\end{definition}

Let us unravel this definition. First, note that $d\psf$ cannot have simple poles and thus $\lG$ cannot have any horizontal edges. Therefore the matching residue condition is vacuous. Furthermore, since $d\psf$ is exact, all of its periods over absolute homology classes are zero and thus it has no residues at higher order poles either. In particular the global residue condition is also vacuous. Thus equivalently we can define a \twdr as follows.

\begin{definition}
Let $\eptn$ be as above.
We call a pointed stable curve $\TWDR$ together with a collection of rational functions and a decomposition $\umSC=(\umS,\umC)$ a {\em twisted rational function} of type $\eptn$  compatible with $\lG$ if the following conditions are satisfied.
\begin{enumerate}
\item ({\bf Prescribed order of vanishing}) Each rational function $\psf_v$ is holomorphic  away from the marked points and nodes. If some marked section $\mS$ is a zero for $\psf_v$, then $\psf_v$ is non-zero away from the marked points and nodes. Furthermore, $\ord_{\mSC} (d\psf) =\eptn_k$  for all $k$.
\item ({\bf Matching order at nodes}) Suppose a node of $\stC$ identifies $\mN[1]\in \tilde{\stC}_{v_1}$ with $\mN[2]\in\tilde{\stC}_{v_2}$, then
\[\ord_{\mN[1]} (d\psf_{v_1}) + \ord_{\mN[2]} (d\psf_{v_2})=-2\]

\item ({\bf Compatibility with the level graph}) Furthermore,  $\ell(v_1)> \ell(v_2)$ if and only if $\ord_{q_2} (d\psf_{v_2})<0$.

\end{enumerate}
\end{definition}
We call $\umS$ the {\em marked zeros and poles} of $\psf$ and $\umC$ the {\em remaining critical points}.
 Sometimes we want to decompose further and write $\umS=(\umZ,\umP)$ where $\umZ$ and $\umP$ are zeros and poles of $\psf$, respectively.
We stress that, as in the definition of \twr, not all zeros of $\psf$ have to be marked points in $\umS$ or nodes, since there can be irreducible components not containing any marked point of $\umS$.

The reason for using \twdrs instead of \twrs is that we can use the smoothing results from \cite{Fred,BDG}.

\subsection{The evaluation morphism}\label{sec:evmap}
Let  $(\psC,\umSC,\psf)$ be a \twdr with marked points $\umSC=(\umZ, \umP, \umC)$. The embedding $(\rleq{\lG},\umZ)\to(\lG,\umZ)$ induces a natural map
 \[
 H_1(\rleq{\lG},\umZ;\ZZ)\to H_1(\lG,\umZ;\ZZ).
 \]
  We define the {\em level filtration} $\LVLF[\bullet](\lG)$ of $H_1(\lG,\umZ;\ZZ)$ by
\begin{equation}\label{eq:LVLF}
\LVLF(\lG):=\im\left(H_1(\rleq{\lG},\umZ;\ZZ)\to H_1(\lG,\umZ;\ZZ)\right)
\end{equation}
and we say that a cycle $[\gamma]\in \LVLF(\lG))\setminus \LVLF[\leq i-1](\lG))$ has {\em top level} $i$.

We are now going to define the evaluation morphism
\[
\ev_{\psf}^{(i)}:\LVLF(\lG))\to \CC.
\]
 We recall that $\req{\lG}$  has additional legs $h(q_e^+)$ corresponding to nodes with $\ell(e+)=i,\ell(e-)<i$. By embedding the leg $h(q_e^+)$ into the edge $e$ we can consider $\req{\lG}$ as a subspace of $\rleq{\lG}$.  We stress that this is a continuous map of cell complexes and not a graph morphism.

The idea to construct $\ev_{\psf}^{(i)}$ is to take a path in $\rleq{\lG}$ and restrict it to $\req{\lG}$. Let $[\gamma]\in \LVLF(\lG)$ be represented by a collection of smooth paths $\gamma$ which are contained in $\rleq{\lG}$. The restriction of $\gamma$ to $\req{\lG}$ decomposes into a sum of disjoint paths $\alpha_1,\ldots, \alpha_m$ which are either closed or have endpoints at the marked zeros or preimages of nodes $\umZ\cup\umN_{(i)}^+$. Here $\umN_{(i)}^+$ are the half-legs corresponding to nodes with $\ell(e+)=i,\ell(e-)<i$. We then define
\[
\ev_{\psf}^{(i)}:=\sum_{k=1}^m \psf(\alpha_k(1))-\psf(\alpha_k(0)).
\]
If all $\alpha_k$ are closed paths this is automatically zero.
Note that it follows from condition $(3)$ in the definition of \twdr that $\psf(q_e^+)$ is always finite.

 For a \twr $\TWR$ we define $\ev_{\stf}^{(i)}$ in the same way, where we decompose $\umS=(\umZ,\umP)$.

\begin{proposition}\label{prop:evwell}
 Suppose $\psf$ is a \twr or a \twdr, then the map $\ev_{\psf}^{(i)}:\LVLF(\lG)\to \CC$ is well-defined.
\end{proposition}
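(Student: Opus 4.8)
The plan is to reduce the assertion to an identity about cellular $1$-chains and then invoke the elementary structure of the first relative homology of a graph.

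First, by definition every class in $\LVLF(\lG)$ is the image of a relative cycle supported on $\rleq{\lG}$, so I may take a representative $\gamma$ to be a collection of smooth paths inside $\rleq{\lG}$, which (as is implicit in the definition of the $\alpha_k$, after possibly a small homotopy) is transverse to the finite set of points $q_e^+$ at which a new leg $h(q_e^+)$ of $\req{\lG}$ is glued into an interior point of an edge $e$; here $e$ ranges over edges with $\ell(e+)=i$ and $\ell(e-)<i$, and I view $\req{\lG}$ as a subcomplex of the subdivision of $\rleq{\lG}$ obtained by inserting a $0$-cell at each such $q_e^+$. I would next record that $\psf$ is finite at every $q_e^+$: the point $q_e^+$ lies on the upper vertex $v(e+)$ of the vertical edge $e$, so were it a pole of $\psf$, Condition $(3)$ in the definition of a \twr (resp.\ \twdr) would force $\ell(v(e+))<\ell(v(e-))$, a contradiction---this is the only use of level-graph compatibility. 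Consequently the linear map $\varepsilon\colon C_0\!\left(\umZ_{(i)}\cup\umN_{(i)}^+;\ZZ\right)\to\CC$, $z\mapsto\psf(z)$, is well defined (and, under the standing assumption $\mu_k\neq 0$, vanishes on $\umZ_{(i)}$).

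Now restricting $\gamma$ to the preimage of $\req{\lG}$ produces exactly the paths $\alpha_1,\dots,\alpha_m$ of the definition, with endpoints in $\umZ_{(i)}\cup\umN_{(i)}^+$, and the closed $\alpha_k$ contribute $0$ both to $\ev_{\psf}^{(i)}$ and to the boundary operator. Hence
\[
\ev_{\psf}^{(i)}(\gamma)=\sum_{k=1}^{m}\bigl(\psf(\alpha_k(1))-\psf(\alpha_k(0))\bigr)=\varepsilon\!\left(\partial\bigl(\gamma|_{\req{\lG}}\bigr)\right),
\]
where $\gamma|_{\req{\lG}}$ is the part, supported on $\req{\lG}$, of the cellular $1$-chain determined by $\gamma$. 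The right-hand side depends on $\gamma$ only through that cellular $1$-chain (in particular not on how $\gamma|_{\req{\lG}}$ was cut into the $\alpha_k$), and it is additive in $\gamma$. It remains to see that the cellular $1$-chain on $\rleq{\lG}$ is determined by the class $[\gamma]\in\LVLF(\lG)$; this is where I use that $\lG$ and $\rleq{\lG}$ are $1$-dimensional cell complexes and $\umZ$ a set of $0$-cells, so that the relative cellular chain complex has no $2$-chains. Thus $H_1(\lG,\umZ;\ZZ)$ and $H_1(\rleq{\lG},\umZ;\ZZ)$ are literally the subgroups of relative $1$-cycles inside $C_1$, two homologous collections of paths have the same cellular $1$-chain, and the inclusion $\rleq{\lG}\hookrightarrow\lG$ induces an injection on $H_1(-,\umZ;\ZZ)$. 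Hence a class in $\LVLF(\lG)$ has a unique preimage cellular $1$-chain $c$ on $\rleq{\lG}$, and $\ev_{\psf}^{(i)}([\gamma])=\varepsilon(\partial(c|_{\req{\lG}}))$ is a well-defined homomorphism. (Unwinding: if $m_e$ is the net multiplicity with which $c$ traverses the half-edge $[v(e+),q_e^+]$ toward $q_e^+$, then $\ev_{\psf}^{(i)}([\gamma])=\sum_e m_e\,\psf(q_e^+)$.)

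I do not expect a substantive difficulty here; the work is essentially bookkeeping. The only delicate point is the passage from the path-level description via the $\alpha_k$ to the chain-level description, i.e.\ checking that a path entering and leaving $\req{\lG}$ across a $q_e^+$ contributes $\psf(q_e^+)$ with exactly the sign dictated by the chosen orientation of the half-edge $[v(e+),q_e^+]$, so that these contributions assemble to $\partial(\gamma|_{\req{\lG}})$ at the cut points, together with the harmless bookkeeping that the boundary terms surviving on $\umZ_{(i)}$ are annihilated by $\varepsilon$. An alternative to the homology argument above is to verify directly that $\ev_{\psf}^{(i)}(\gamma)$ is invariant under the elementary moves generating the relation of being homologous rel $\umZ$ inside $\rleq{\lG}$, but the chain-level reformulation makes this automatic.
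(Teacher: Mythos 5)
Your proof is correct, but it follows a genuinely different route from the paper's. The paper proves well-definedness purely at the homological level, following the template of \cite[Prop. 4.2]{Fred}: it uses the long exact sequence of the triple $\umZ\subseteq \rleq[i-1]{\lG}\cup\umZ\subseteq \rleq{\lG}\cup\umZ$ together with excision to produce maps $g_i$ and $\delta_i$, and then exhibits $\ev_{\psf}^{(i)}$ as the composite $\ev\circ\delta_i\circ g_i$ of homomorphisms that are manifestly well-defined. Your argument instead works at the chain level and exploits the specific fact that $\lG$ and $\rleq{\lG}$ are $1$-dimensional: with no $2$-cells, the relative homology $H_1(-,\umZ;\ZZ)$ coincides with the subgroup of relative $1$-cycles in $C_1$, so the inclusion $\rleq{\lG}\hookrightarrow\lG$ is injective on $H_1$ and a class in $\LVLF(\lG)$ determines a unique cellular $1$-chain on $\rleq{\lG}$; well-definedness then reduces to the identity $\ev_{\psf}^{(i)}=\varepsilon\circ\partial\circ(\text{restriction to }\req{\lG})$. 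This is a clean and more elementary argument for the case at hand. What the paper's approach buys is uniformity: by phrasing everything in terms of excision and the boundary map, the same proof carries over verbatim to the $2$-dimensional complexes that appear in \cite{Fred}, where your ``no $2$-chains'' shortcut is unavailable. Both use Condition $(3)$ in the same place, namely to ensure $\psf(q_e^+)$ is finite so that the $0$-chain evaluation map makes sense.
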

 \begin{proof}
 The proof is similar to \cite[Prop. 4.2]{Fred} for $2$-dimensional cell complexes instead of graphs.
From the long exact sequence of the triple
\[
\umZ\subseteq \rleq[i-1]{\lG}\cup \umZ\subseteq \rleq[i]{\lG}\cup \umZ
\]
we obtain the exact sequence
\[
 \begin{tikzcd}
 H_1(\rleq[i-1]{\lG},\umZ;\ZZ)\ar[r] &  H_1(\rleq[i]{\lG},\umZ;\ZZ) \ar[r,"\nu_i"] &  H_1(\rleq[i]{\lG}, \rleq[i-1]{\lG}\cup
\umZ;\ZZ)\ar[d,"\simeq"] \\
 & & H_1(\req{\lG},\umN_{(i)}^+\cup\umZ;\ZZ)
 \end{tikzcd}
 \]
 where the vertical isomorphism is induced by excising $\rleq[i-1]{\lG}$. We denote by
 \[
 g_i: H_1(\rleq[i]{\lG},\umZ;\ZZ)\to H_1(\req{\lG},\umN_{(i)}^+\cup\umZ;\ZZ)\]
  the composition of $\nu_i$ with the excision map. For a path $\gamma$ in $\rleq{\lG}$ the map $g_i$ is given by $g_i(\gamma)= \sum_{k} \alpha_k$ where $\sum_{k}\alpha_k$ is the restriction of $\gamma$ to $\req{\lG}$, considered as paths with endpoints in $\umZ\cup\umN^+_{(i)}$.
 Furthermore, there exists a boundary map
 \[
 \delta_i:H_1(\req{\lG},\umZ \cup \umN_{(i)}^+;\ZZ) \to H_0(\umZ\cup\umN_{(i)}^+;\ZZ)
 \]
 from the long exact sequence of the pair $(\req{\lG},\umZ \cup \umN_{(i)}^+)$ which sends a path $\alpha$ to $[\alpha(1)]-[\alpha(0)]$.
 We define
 \[
 \begin{split}
\ev: H_0(\umZ\cup\umN^+;\ZZ)&\to\CC,\\
\sum_{k}c_k[\mZ]+\sum_{e}d_e[\mN^+]&\mapsto \sum_{k}c_k\psf(\mZ)+\sum_{e}d_e\psf(\mN^+).
\end{split}
 \]
 Then we have
 \[
\ev_{\psf}^{(i)} =\ev\circ\ \delta_i\circ g_i
 \]
 and thus $\ev_{\psf}^{(i)}$ is well-defined.
 \end{proof}

We are now able to characterize the closure of $\DRc\subseteq \Mgnbar[g,n']$ using the results of \cite{Fred}.

\begin{proposition}\label{prop:BDDRc}
A marked stable curve $(\psC,\umSC)$ is contained in the closure of $\DRc$ in $\Mgnbar[g,n']$ if and only if there exists a level graph $\lG$ for $\psC$ and a collection of rational functions $\psf$ on $\psC$ such that $\TWDR$ is a
\twdr compatible with $\lG$ and $\ev_{\psf}{(i)}=0$ for all $i$.
\end{proposition}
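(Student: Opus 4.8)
The plan is to deduce \Cref{prop:BDDRc} directly from the main theorem of \cite{Fred} (with the refinements of \cite{BDG}) applied to the particular linear subvariety of the stratum $\PStra[(\eptn)]$ cut out by the vanishing of all absolute periods. First I would observe that $\DRDc \subseteq \PStra[(\eptn)]$ is, by construction, the locus where all absolute periods $\int_\gamma \omega$ with $\gamma \in H_1(X\setminus\umP,\umZ;\ZZ)$ vanish; since absolute periods are linear functions in period coordinates of the stratum, $\DRDc$ is a \emph{linear subvariety} in the sense of \cite{Fred}, in fact the one defined by the full space of absolute cycles. The boundary of a linear subvariety inside the moduli space of multi-scale differentials $\MSDS[(\eptn)]$ is described in \cite{Fred,BDG} by the conditions that (a) the twisted differential underlying the boundary point is a multi-scale differential of type $\eptn$, i.e.\ a twisted differential in the sense of \cite{BCGGMgrc}, and (b) on each level the induced linear equations are satisfied — concretely, the limit of each defining linear equation, interpreted via the level filtration and the associated "period at level $i$" maps, still vanishes. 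The content of the proposition is then the identification of condition (b), for the specific system of equations "all absolute periods vanish", with the single clean statement "$\ev_{\psf}^{(i)} = 0$ for all $i$".

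Next I would carry out this identification. The key observation is that an absolute cycle $\gamma \in H_1(X\setminus\umP,\umZ;\ZZ)$ of top level $i$, when restricted to $\req{\lG}$ as in the construction of $\ev_{\psf}^{(i)}$, measures precisely the "surviving" part of the period $\int_\gamma \omega$ after rescaling by the level parameter $\scl$: because $d\psf$ is exact on each component, the integral of $d\psf_v$ along a path on the level-$i$ component equals the difference of values of $\psf_v$ at the endpoints, and at the preimages $q_e^+$ of nodes dropping to lower levels the value $\psf(q_e^+)$ is finite by condition $(3)$ in the definition of \twdr. So the leading-order term of $\int_\gamma \omega$ in the degeneration is exactly $\ev_{\psf}^{(i)}([\gamma])$. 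The well-definedness established in \Cref{prop:evwell}, together with the exact sequence of the triple used there, shows that this leading term depends only on the class of $[\gamma]$ in $\LVLF(\lG)$, so the vanishing of all absolute periods in the limit is equivalent to $\ev_{\psf}^{(i)}$ vanishing on all of $\LVLF(\lG)$ for every $i$. I would then record that, conversely, any \twdr compatible with $\lG$ with all evaluation morphisms vanishing arises as such a boundary point: this is the smoothability direction, which is again supplied by \cite{Fred,BDG} once one checks that the vanishing of $\ev_{\psf}^{(i)}$ is exactly the (necessary and sufficient) condition from those references for the linear subvariety $\DRDc$ to extend across the corresponding boundary stratum of $\MSDS[(\eptn)]$.

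The remaining bookkeeping is to translate between $\MSDS[(\eptn)]$ and $\Mgnbar[g,n']$: a point of $\overline{\DRc}$ is the image under the forgetful map $\PStra[(\eptn)] \dashrightarrow \Mgn[g,n']$ of a point of $\overline{\DRDc} \subseteq \MSDS[(\eptn)]$, and conversely any \twdr on $(\psC,\umSC)$ gives, after choosing the scaling data, a point of $\MSDS[(\eptn)]$ lying in $\overline{\DRDc}$ by the previous paragraph. Here one uses that the forgetful morphism from the space of multi-scale differentials to $\Mgnbar$ is proper and that the level graph data of a \twdr records exactly the combinatorial type of the corresponding boundary stratum of $\MSDS[(\eptn)]$; since the stratum $\Stra[(\eptn)]$ has no residue conditions (no simple poles, hence no horizontal edges, and exactness kills all higher-order residues, as noted just before the definition of \twdr), there is no discrepancy between "twisted differential of type $\eptn$" and "\twdr", and the matching of levels is literal.

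The main obstacle I expect is the precise matching of normalizations between the present setup and \cite{Fred,BDG}: one must check that "all absolute periods vanish" really defines a linear subvariety in the technical sense used there (i.e.\ that the relevant subspace of $H^1(X\setminus\umP,\umZ)$ is defined over the right field and is $\GL_2(\RR)$-invariant, or whatever hypotheses those papers impose), and that the "degeneration of the linear equations" described in those references, which is phrased in terms of the level filtration on twisted period coordinates, coincides on the nose with the combinatorial evaluation map $\ev_{\psf}^{(i)}$ defined above via restriction of cycles to $\req{\lG}$. In other words, the heart of the proof is a compatibility lemma between two a priori different descriptions of the same limiting linear conditions; once that is in place, the proposition follows formally. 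I would isolate that compatibility as the single substantive step and verify it by tracing a top-level-$i$ absolute cycle through both constructions, using exactness of $d\psf$ to replace period integrals by endpoint value differences.
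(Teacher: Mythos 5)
Your overall strategy matches the paper's: treat $\DRDc$ as a linear subvariety of $\PStra[(\eptn)]$ cut out by the vanishing of all absolute periods, invoke \cite[Prop.~10.1]{Fred} for the description of its boundary inside the moduli space of multi-scale differentials, identify the degenerated linear conditions there with the evaluation morphism $\ev_{\psf}^{(i)}$ via exactness of $d\psf$ on each component, and then use \cite[Cor.~4.3]{BDG} for the converse smoothing direction. The translation between period conditions and the combinatorial evaluation map that you flag as the ``heart of the proof'' is indeed what the paper carries out, using the excision map $\tilde g_i$ from \cite{Fred}.

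However, there is a genuine gap: you never verify that the boundary object is actually a \twdr. The definition of \twdr contains the extra requirement in condition $(1)$ that if a marked section $\mS$ is a zero of $\psf_v$, then $\psf_v$ has \emph{no other} zeros away from the marked points and nodes. This does \emph{not} follow from ``$d\psf$ is a twisted differential of type $\eptn$ whose absolute periods vanish'': an unmarked \emph{simple} zero $z'$ of $\psf_v$ is a regular point of $\psf_v$, hence invisible to the differential $d\psf_v$ and to the prescribed vanishing orders $\eptn$. The paper closes this gap with a separate dynamical argument: it constructs a holomorphic section marking $z'(t)$ along a degenerating family of exact differentials $(X(t),\omega(t))$, observes that the relative period $\int_{z_0(t)}^{z'(t)}\omega(t)=t\cdot h(t)$ vanishes at the limit, and deduces that the nearby rational function $f(t)$ would acquire an unmarked zero near $z'(t)$, contradicting that all zeros on $\DRc$ are marked. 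Without that step, the object produced by the linear-subvariety boundary description is only known to have an exact twisted differential with vanishing evaluation, and the proposition as stated (which asserts the existence of a \twdr in the strict sense) does not follow. You should isolate and prove this auxiliary claim rather than fold it into the general ``matching normalizations'' caveat.
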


\begin{proof}
Let $(\psC,\umSC)$ be  in the closure of $\DRc$. Then there exists a level graph $\lG$ on $\psC$ and a twisted differential $\twD$ on $\psC$ compatible with $\lG$ such that $(\psC,\umSC,\twD)$ is in the closure of $\DRDc$ inside $\Xi\Mgn[g,n'](\eptn)$. Note that $\twD$ cannot have residues, since every residue is the limit of an absolute period $\int_{\gamma} {\omega}$ for $\gamma\in H_1(X\setminus\umP;\ZZ)$ along a family of differentials $(X,\omega)$ degenerating to $(\psC,\twD)$. In particular, $\lG$ has  no horizontal nodes.

 The subvariety $\DRDc\subseteq\PStra[(\eptn)]$ is a linear subvariety cut out by the equations
  \[
  \int_{\pa} \omega=0\quad\forall \pa\in H_1(\stC_0\setminus \umP_0,\umZ_0;\ZZ),
  \]
  near a point $(\stC_0,{\umSC}_0,\omega)\in \DRDc$. We have \[
T_{(\stC_0,\tilde{\mathbf{x}}_0,\omega)} \DRDc= \ANN(H_1(\stC_0\setminus \umP_0,\umZ_0;\ZZ)).
\]
Thus by \cite[Prop. 10.1]{Fred} the intersection of $\DRDc$ with the open boundary stratum corresponding to $\lG$ is a linear subvariety given by $\ANN\left(\oplus_{i=-L(\lG)}^{0} \im \tilde{g}_i\right)$. Here the map
$\tilde{g}_i:H_1(\rleq{X}\setminus \umP,\umZ;\ZZ)\to H_1(\req{X}\setminus \umP,\umZ\cup\umN^+_{(i)};\ZZ)$ can be described as follows.
For a simple closed curve $\gamma \in \rleq{X}$, we have $\tilde{g}_i(\gamma)=\sum_{k}\alpha_k$ where $\sum_{k} {\alpha_k}$ is the restriction of $\gamma$ to $\req{X}$, considered as paths with endpoints in $\umZ\cup\umN^+_{(i)}$.
In particular we have $\int_{\gamma} \twD=0$ for any cycle contained in $X_v\setminus \umN$ and thus $\twD$ is a collection of exact differentials. Thus there exists a collection of rational functions $\psf$ on $\psC$ such that $d\psf=\twD$.
Furthermore for any $\gamma\in H_1(X\setminus \umP,\umZ)$ we have
\[
0=\int_{\tilde{g}_i(\gamma)} \twD =\int_{\tilde{g}_i(\gamma)} d\psf=\sum_{k=1}^m\int_{\alpha_k}d\psf=\sum_{k=1}^m \psf(\alpha_k(1))-\psf(\alpha_k(0))= \widetilde{\ev}_{\psf}^{(i)}(\gamma).
\]
It remains to show that if an irreducible component $X_v$ contains a marked zero $\mZ$, then $\psf_v$ is non-zero away from the marked points $\umS$ and nodes. We fix a marked zero $z_0$ of $\psf_v$ and let $z'$ be a zero  on the same component that is not a node and  is not marked.
Note that $z'$ is necessarily a simple zero and we can thus construct a holomorphic section marking $z'(t)$ along a family $(X(t),\omega(t))$ degenerating to $(\psC,\twD)$. Suppose that $(X(t),\omega(t))$ is a holomorphic family of exact differentials degenerating to $\twD$.
Then \[
\int_{z_0(t)}^{z'(t)}\omega(t)=t\cdot h(t)\]
where $h(t)$ is analytic in a neighborhood of the origin and thus the corresponding rational function $f(t)$ on $X(t)$ has a zero in a small neighborhood of $z'(t)$, disjoint from all marked zeros of $f(t)$. But since on $X(t)$ {\em  all } zeros of $\psf$ are marked, this is a contradiction.
This shows that $\TWDR$ is a \twdr compatible with $\lG'$.

It remains to show that we can smooth out a \twdr, while staying inside a \dr.
Let $\TWDR$ be a \twdr with vanishing evaluation morphism and $\twD=d\psf$ the associated twisted differential.
We denote $D\subseteq \MSDS$ the boundary of the moduli space of multi-scale differentials.
Let $U$ be a small open neighborhood of $(\psC,\umSC,\twD)$ in $\MSDS$. On $U\setminus D$ we can define a {\em local} analytic subvariety by  \[
 Z:=\left\{(\stC,\omega)\,:\, \int_{\pa} \omega=0\quad\forall \pa\in H_1(\stC\setminus \umP,\umZ;\ZZ)\right \}\subseteq U\setminus D.
  \]
By \cite[Cor 4.3]{BDG}, $Z$ is smooth and transverse to the boundary and we can thus smooth out $\TWDR$. Note that even though $Z$ is only defined locally, the proof of (loc. cit.)
applies verbatim in this situation.
Alternatively, the smoothness of $Z$ also follows from the proof of \cite[Prop 8.9]{Fred}.
\end{proof}

 \section{Twisting a \twr}\label{sec:twisting}
So far we have obtained a smoothing result for \twdr and now we need to transfer it to a smoothing result for \twrs. In this section we thus collect various results relating \twrs and \twdrs.

\begin{definition}Let $\TWDR$ be a \twdr of type $\eptn$ compatible with $\lG'$, where $\eptn$ extends $\ptn$. We recall that $\umSC=(\umS,\umC)$ consists of some marked zeros and poles of $\psf$ as well as the remaining critical points, that are neither zeros, poles of $\psf$ or nodes. In particular we can consider $\umS$ as a subset of $\umSC$. We consider the prestable curve $(\psC,\umS)$ where we forget the remaining critical points and let $(X,\umS)$ be its stabilization. Here we abused notation and  denote the images of $\umS$ under the stabilization map $\psC\to\stC$ again by $\umS$. The stabilization induces a graph morphism $\phi:\dG'\to\dG$ contracting edges that correspond to unstable chains of rational curves. The dual graph $\dG$ inherits a level graph structure $\lG$ from $\lG'$ using the stabilization morphism $\phi$. More concretely, for a vertex of $\lG'$ corresponding to a stable component we define $\ell(\phi(v)):=\ell'(v)$, where $\ell$ and $\ell'$ are the level functions of $\lG$ and $\lG'$, respectively. Furthermore, $\stC$ inherits a collection of rational functions $(f_v)_{v\in V(\dG)}$.
We call $\TWR$ {\em the stabilization } of $\TWDR$ and similarly say that $\lG$ is the stabilization of $\lG$.
We call any component of $\psC$ that is contracted under stabilization {\em unstable}.
Similarly starting with a \twr $\TWR$, if there exists a \twdr $\TWDR$ such that $\TWR$ is the stabilization of $\TWDR$, then we call $\TWDR$ a {\em twist} of $\TWR$.
\end{definition}

From now on $\lG'$ will always denote the level graph of a \twdr and $\lG$ the level graph of its stabilization.

The goal of this section is to show that we have the following correspondence: The stabilization of a \twdr is a \twr and every \twr admits a twist. We are now going to make this precise.

 \begin{lemma}\label{lm:ContrTW}Let $\eptn$ be a partition extending $\ptn$. Furthermore, let $\TWDR$ be a \twdr of type $\eptn$ compatible with a level graph $\lG'$. Then its stabilization $\TWR$ is a \twr of type $\ptn$ compatible with $\lG$.
 \end{lemma}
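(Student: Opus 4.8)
The plan is to verify the three defining conditions of a \twr for the stabilization $\TWR$ directly from the corresponding (stronger) conditions for the \twdr $\TWDR$, keeping careful track of what happens along the unstable chains of rational curves that get contracted by $\phi\colon\dG'\to\dG$. The key structural input is a description of these unstable chains: a maximal chain of rational components contracted by stabilization connects two points lying on stable components (or already contains the marked/noded points needed for stability at its ends), each intermediate component carries exactly two special points, and the rational function $\psf$ restricted to such a chain is determined by its behavior at the ends. I would first record this combinatorial picture, perhaps invoking or recalling the relevant statement about when a component of $\psC$ is unstable (it is rational, meets the rest of $\psC$ in at most two points, and carries no marked point of $\umS$).

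Next I would check \textbf{Condition (1)}, prescribed order of vanishing. The marked points $\umS=(\umZ,\umP)$ survive stabilization, and since the critical points $\umC$ that are forgotten are never nodes and never marked zeros/poles, the stabilization map is a local isomorphism near each $\mS$; hence $\ord_{\mS}\stf=\ord_{\mSC}(d\psf)+1=\eptn_k+1=\ptn_k$ once one matches indices via $\dptn_k=\ptn_k-1$. For the clause ``if a component $X_v$ contains a marked zero, all its zeros are at marked points or nodes'': a stable component of $X$ is the image of a stable component $\psC_{v'}$ of $\psC$, the \twdr condition says $\psf_{v'}$ is nonzero away from marked points and nodes of $\psC$, and stabilization only contracts rational tails/bridges attached to $\psC_{v'}$; I must argue that $f_v$ acquires no new zeros on $X_v$ away from $\umS$ and the nodes of $X$. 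A new zero could only appear where a contracted chain was attached; but $\psf$ on that chain either is constant equal to that boundary value or, if nonconstant, the boundary value is the value $\psf_{v'}$ takes at the node, which by the no-extra-zeros property of $\psf_{v'}$ is either zero at a point that becomes a node or marked point of $X$, or nonzero — so in $X_v$ the point is still a node (the chain, if $\psf$ is nonconstant on it, must carry a zero at its other end, forcing a marked point/node there and hence the chain is not fully contracted, contradiction) — this case analysis is the fiddly part.

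Then \textbf{Condition (2)}, matching order at nodes: a node of $X$ is the image under $\phi$ of a node of $\psC$ at the two ends of (possibly) a contracted chain $C_{v_1'}-\cdots-C_{v_2'}$; on each internal rational component the \twdr matching condition gives $\ord_{q^-}(d\psf)+\ord_{q^+}(d\psf)=-2$, and summing the exact-differential order relation $\ord_{x}(d\psf)$ in terms of $\mult_x\psf$ along the chain — using that on a rational component the total is governed by Riemann–Hurwitz — yields that the orders at the two surviving ends satisfy $\ord_{q_1}(d\stf_{v_1})+\ord_{q_2}(d\stf_{v_2})\ge -2$, with equality degrading to inequality precisely because the contracted chain may have ``absorbed'' some ramification. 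Concretely, if the chain is nonconstant for $\stf$ the orders at the ends can only increase relative to $-2$; if it is constant the relevant node is a node where $\stf$ has no pole on at least one side and (2) is vacuous. Finally \textbf{Condition (3)}, compatibility with the level graph: the inherited level function sets $\ell(\phi(v))=\ell'(v)$ for stable $v$, and the contracted chains are horizontal for $\lG'$ — this is exactly where I must use that $\lG'$ has no horizontal edges (noted after the \twdr definition because $d\psf$ has no simple poles) to see that an unstable chain, being a chain of $\PP^1$'s carrying only two special points each, must actually lie within a single level, so contracting it does not disturb the level ordering; then if $q_2$ is a pole of $f_{v_2}$ on the stable side, the \twdr version of (3) (the ``if and only if'' with $\ord(d\psf_{v_2})<0$) transports to $\ell(v_1)>\ell(v_2)$ after checking the pole order is preserved or increased along the contraction.

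The main obstacle I anticipate is \textbf{Condition (1)}'s second clause — ruling out spurious zeros of $f_v$ appearing on a stable component after contracting an attached rational tail or bridge — together with the bookkeeping needed to show the level structure is well-defined and respected (i.e. that unstable chains are genuinely horizontal in $\lG'$, so that the naive definition $\ell(\phi(v))=\ell'(v)$ is consistent). Everything else is a direct translation from equalities for $\TWDR$ to the required inequalities for $\TWR$, using \eqref{eq:orddf} and Riemann–Hurwitz on the contracted rational chains; I would present it as a lemma on the structure of unstable chains followed by three short verifications.
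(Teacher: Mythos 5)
Your overall strategy matches the paper's: decompose the contracted locus into rational bridges and rational tails, and verify the three defining conditions of a \twr directly, using the structural lemma about unstable components (which in the paper is \Cref{lemma:LocMax}) as the key input. Your treatment of the matching condition~(2) by summing along the chain is essentially the paper's computation, just phrased via Riemann--Hurwitz rather than by literally adding the equalities $\ord_{q^+}d\psf+\ord_{q^-}d\psf=-2$ at each internal node and collecting the positive contributions from $\umC$. However, there are two real problems with the argument as written.

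The serious one is in your treatment of Condition~(3). You assert that ``the contracted chains are horizontal for $\lG'$'' and that an unstable chain ``must actually lie within a single level, so contracting it does not disturb the level ordering.'' This is backwards, and in fact contradicts the very fact you invoke: since $\lG'$ has \emph{no} horizontal edges, no two adjacent components of an unstable chain can share a level, so an unstable bridge of length $n$ necessarily spans $n+1$ distinct levels. The correct argument, which the paper gives, is the opposite: by the second part of \Cref{lemma:LocMax}, an unstable component with two nodes is \emph{not} a local maximum of the level function, so starting from the pole end $q'$ of the bridge the level strictly increases at every step (strictness from the absence of horizontal edges); hence $\ell(v(q))>\ell(v(q'))$. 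Nothing like ``lying within a single level'' is true or used. Contracting a vertical chain does collapse intermediate levels, and the inherited level function $\ell(\phi(v)):=\ell'(v)$ on stable vertices is well-defined and compatible precisely because of this strict monotonicity, not because levels are unchanged.

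The second issue is in the clause of Condition~(1) you flag as fiddly: your parenthetical argument that a nonconstant $\psf$ on a contracted chain ``must carry a zero at its other end, forcing a marked point/node there'' does not hold for a rational tail, whose end component has only one node and, by \Cref{lemma:LocMax}, no marked zero or pole of $\psf$; its zeros are a priori unmarked simple zeros, which your argument does not preclude. The paper's proof of \Cref{lm:ContrTW} sidesteps this clause (it only invokes \Cref{lemma:LocMax} to conclude that marked points lie on stable components), so you are not deviating from the paper so much as trying to fill a step it leaves implicit --- but the case analysis you sketch is not the correct way to close it. The missing ingredient is again \Cref{lemma:LocMax}: a rational tail cannot be attached at a pole of $\psf_{v'}$ (else its end component would be a local maximum with only one node, hence carry no pole of $\psf$ at all, contradicting nonconstancy), and the degree bookkeeping for $\psf$ across $\psC$ together with holomorphicity of each $\psf_v$ away from marked points and nodes then rules out a contracted tail attached at a zero.
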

 \begin{proof}

Since $\TWDR$ is a \twdr, we have
\[ \ord_{\mS}\psf = \eptn_k \text{ for all }k,\, \ord_{\mN^+} d\psf+ \ord_{\mN^-} d\psf= -2 \text{ for all nodes } e\in E.
\]
The stabilization $\TWR$ is obtained by contracting all unstable components.
Chains of unstable components come in two flavors. A chain either connects two nodes, in which case we call it a {\em rational bridge}, or it forms a {\em rational tail}.

We first study the contraction of a rational tail. If the end of the rational tail does not contain a marked zero or a marked pole of $\psf$, then we do not mark the preimage of the node where the rational tail connects after stabilization and thus there is nothing to check at this point. We now claim that the end of a rational tail never contains a marked zero or a marked pole. This is part of the following \Cref{lemma:LocMax}. Thus, if an irreducible component $X_v$ contains a marked point $\mS$, then $X_v$ is not an unstable component and $\ord_{\mS} \stf= \ord_{\mS} \psf=\ptn_k$.

It remains to analyze the case of a rational bridge. Suppose we have a chain $C_1,\ldots, C_n$ of rational curves connecting two nodes preimages of nodes $q$ and $q'$.
Let $\mC[1],\ldots,\mC[m]$ be the marked critical points contained in $C_1,\ldots, C_n$. Then we have
\[
\ord_{q} d\psf + \ord_{q'} d\psf= -2+\sum_{k=1}^m \ord_{\mC} d\psf> -2,
\]
which proves the matching order at nodes for a \twr.
We also need to show the compatibility with the level graph, i.e. if $\ord_{q'} \stf<0$ then $\ell(v(q)) > \ell(v(q'))$.
By the following \Cref{lemma:LocMax},  no component $C_k$ of the rational bridge is a local maximum for the level order and thus the level can only increase along the chain $C_1,\ldots, C_n$ of rational curves.  Since $\lG'$ has  no horizontal nodes, the level is strictly increasing and thus $\ell(v(q))>\ell(v(q')$.
 \end{proof}

\begin{lemma}\label{lemma:LocMax} Let $\TWDR$ be a \twdr.
An unstable component $X_v$ does not contain a marked zero or pole of $\psf_v$.
Additionally, an unstable component with two nodes is not a local maximum for the level order.
\end{lemma}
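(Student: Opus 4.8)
The plan is to analyse an unstable component $X_v$ geometrically. Since $\psC$ is stable with respect to $\umSC$ but $X_v$ gets contracted once we forget $\umC$ and stabilise with respect to $\umS$ alone, $X_v$ is a rational curve on which the number of nodes plus the number of $\umS$-marked points is at most $2$, while the number of nodes plus the number of $\umSC$-marked points is at least $3$; hence $X_v$ carries at least one marked critical point $\mC\in\umC$, and there $\mult_{\mC}\psf_v\ge 2$ because $\ord_{\mC}(d\psf_v)=\eptn_{(\mC)}\ge 1$. As $X_v$ is collapsed and connected to the rest of $\psC$, it has at least one node, so there are exactly two cases: either $X_v$ has two nodes and no $\umS$-marked point, or $X_v$ has a single node $\mN[]$ and at most one $\umS$-marked point. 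Three facts will be used repeatedly: $d\psf_v$ is holomorphic and nonzero away from the nodes and marked points of $X_v$; $d\psf_v$ has no simple poles (being exact it has no residues), so $\lG$ has no horizontal edges; and the level rule from condition $(3)$, namely that at a node of $X_v$ the branch order $\ord(d\psf_v)$ is $\ge 0$ exactly when $v$ has strictly larger level than the vertex on the other side.

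First I would dispose of the two-node case and the marked-zero half of the first assertion. If $X_v$ has two nodes, no $\umS$-marked point, and is a local maximum, the level rule gives $\ord(d\psf_v)\ge 0$ at both nodes, so $d\psf_v$ — hence $\psf_v$ — is holomorphic at both nodes; since the poles of $\psf_v$ can only lie at $\umS$-marked points (none here) or at nodes, $\psf_v$ has no poles, so it is constant, contradicting that $d\psf_v$ is a nonzero differential. Next, suppose $X_v$ is unstable and carries $\mS\in\umZ$: by the first paragraph $X_v=\PP^1$ has a unique node $\mN[]$, $\mS$ is its only $\umS$-marked point, and there is a marked critical point $\mC$. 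By condition $(1)$ the function $\psf_v$ is nonzero away from marked points and nodes, so every zero of $\psf_v$ lies in $\{\mS,\mN[]\}$; its poles also lie in $\{\mS,\mN[]\}$, and since $\mS$ is a zero while $\psf_v$ is nonconstant this forces the fibre over $0$ to be $\{\mS\}$ and the fibre over $\infty$ to be $\{\mN[]\}$. Thus $\psf_v$ is totally ramified over $0$ and over $\infty$, and the Riemann--Hurwitz identity $\sum_p(\mult_p\psf_v-1)=2\deg\psf_v-2$ is already saturated by these two points, so $\psf_v$ is unramified elsewhere — contradicting $\mult_{\mC}\psf_v\ge 2$.

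The remaining case, a marked \emph{pole} $\mS\in\umP$ on an unstable component $X_v$ with single node $\mN[]$ and a marked critical point $\mC$, is the one I expect to be genuinely delicate: condition $(1)$ imposes nothing, so $\psf_v$ may have unmarked (necessarily simple) zeros, and a naive ramification count does not close — indeed one must rule out the possibility that the negative order contributed by the node is compensated by extra ramification of $\psf_v$ at the $\mC$'s. The approach I would take is to pin down $\ord_{\mN}(d\psf_v)$ using the level graph rather than purely local data on $X_v$: because $X_v$ is being contracted onto the component across $\mN[]$, the pole $\mS$ must reappear there with its full order, which through the matching condition $\ord_{\mN^+}(d\psf)+\ord_{\mN^-}(d\psf)=-2$ forces $\ord_{\mN}(d\psf_v)$ to be precisely the value making $\psf_v$ totally ramified at $\mN[]$; combined with total ramification at $\mS$, Riemann--Hurwitz is again saturated, contradicting the presence of $\mC$. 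The main obstacle will be justifying this reduction carefully — i.e. that the node order cannot be absorbed by additional critical points of $\psf_v$ on $X_v$ — and it is exactly here that the compatibility of the twisted differential with the level structure, and the fact that $X_v$ lies on a contracted chain, must be invoked rather than a local argument.
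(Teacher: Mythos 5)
Your analysis of the two-node local maximum and of the marked-\emph{zero} case is correct and coincides with the paper's: in the zero case, condition $(1)$ confines both zeros and poles of $\psf_v$ to $\{\mS,\mN[]\}$, forcing total ramification over $0$ and $\infty$, and the Riemann--Hurwitz count then leaves no room for the marked critical point $\mC$. The paper words this as ``$\psf_{C_n}$ has exactly one zero and one pole and thus no other critical points,'' which is the same count.

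The marked-\emph{pole} case is where your proposal breaks, and you are right to flag it as the delicate one — but the key claim you hope to establish is actually false. You want $\ord_{\mN[]}(d\psf_v)$ to be the value making $\psf_v$ totally ramified at $\mN[]$, on the grounds that ``the pole $\mS$ must reappear across the node with its full order.'' A degree count on $X_v\simeq\PP^1$ shows the opposite. Since $d\psf_v$ has degree $-2$ and its divisor is supported on $\mS$, the marked critical points, and the node, writing $m=\mult_{\mS}\psf_v$ one gets $\ord_{\mN[]^-}(d\psf_v)=m-1-\sum_{c_k\in X_v}\ord_{c_k}(d\psf_v)$, and the matching condition yields $\ord_{\mN[]^+}(d\psf_w)=-m-1+\sum\ord_{c_k}(d\psf_v)$, i.e. $\mult_{\mN[]^+}\psf_w=m-\sum\ord_{c_k}(d\psf_v)<m$. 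So the pole reappears with \emph{strictly smaller} order (the deficit is exactly the total ramification hidden at the $\umC$-points), $\psf_v$ is not totally ramified at $\mN[]$, and Riemann--Hurwitz does not close. A purely local argument on $X_v$ cannot detect the contradiction, because nothing on $X_v$ alone is inconsistent; the inconsistency is global. The paper's proof exploits precisely the order drop $\ord_{q_0}\psf>\ord_{\mS}\psf$ you would have uncovered: it removes the rational tail, marks $q_0$ as a (shallower) pole, checks via a deformation retract that the evaluation morphism still vanishes, then invokes Proposition~\ref{prop:BDDRc} to smooth the truncated \twdr to an exact differential on a smooth curve — producing a rational function with strictly more zeros than poles, which is impossible. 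That smoothing step is what your local count is missing, and without it the pole case does not close. (Note also that this direction of the paper's argument quietly uses that the evaluation morphism of the original \twdr vanishes, even though the lemma statement does not say so explicitly; that hypothesis would have to be carried along.)
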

\begin{proof}
We first assume there exists an unstable component containing a marked zero or pole $x$ of $\psf_v$.  Then there exists an unstable rational tail $C_1,\ldots,C_n$ with $x\in C_n$ and a stable component $C_0$ which is separated from $C_1$ by a node $q$. We let $q_0$ be the preimage of $q$ contained in $C_1$.
Note that $C_n$ has a single node. If $x$ is a zero of $\psf_{C_n}$, then $\psf_{C_n}$ has a pole at  the node.  Furthermore, since all zeros of $\psf_{C_n}$ are marked, $\psf_{C_n}$ has exactly one zero and one pole and thus no other critical points. This yields a contradiction since every unstable component contains a marked critical point.

We can thus assume that $x$ is a pole for $\psf_{C_n}$.  Let then $c_1,\ldots,c_m$ be the collection of  marked critical point on  the rational tail $C_1,\ldots,C_n$. Then
\[
\ord_{q_0} d\psf= \ord_{x} d\psf+\sum_{k=1}^m \ord_{\mC} d\psf> \ord_{x} d\psf.
\]
Here we used that the order of $d\psf$ is positive at all critical points in $\umC$. Note that in particular $\ord_{q} \psf> \ord_x \psf$. We now remove the rational tail $C_1,\dots,C_n$ and if $q_0$ is a pole for $\psf$ we mark it on the resulting curve $X'$. Furthermore, $X'$ inherits a level graph by contracting all the edges corresponding to nodes in the rational tail $C_1,\ldots,C_n$. We claim that the evaluation morphism still vanishes for $(X',\umS,d\psf)$.
 The dual graph $\lG$ of $(X$, $\umZ)$ deformation retracts onto the dual graph $\lG'$ of $(X',\umZ)$, where the deformation retract can be chosen to fix the zeros $\umZ$.
 Let $\alpha:(\lG,\umZ)\to(\lG',\umZ)$ be the resulting retract. Then $\alpha(\LVLF(\lG))=\LVLF(\lG')$ and  the diagram
 \[
\begin{tikzcd}[row sep=large, column sep=large]
 \LVLF(\lG) \ar[dr,"\ev_{f}^{(i)}(\lG)"]\ar[d,swap,"\alpha"] &\\
  \LVLF(\lG') \ar[r,swap,"\ev_{f}^{(i)}(\lG')"] & \CC
 \end{tikzcd}
 \]
 commutes, see also the proof of \Cref{lm:evCD} for a similar argument. This finishes the proof of the claim.

Thus, by what was already proven in \Cref{prop:BDDRc},  $(X',\umS,d\psf)$ can be smoothed out to an exact differential where all marked points $\umZ$ stay zeros of $d\psf$ of order $\ord_{\mZ}d\psf$. But this exact differential comes from a rational function with more zeros than poles, since $\ord_{q_0} \psf> \ord_{x}\psf$ as noted above, which is impossible.

It remains to prove the second part. Assume that some unstable component $X_v$ contains two nodes and is a local maximum for the level order. Then $X_v$ has no poles at the nodes and thus needs to have a marked pole somewhere. Thus $X_v$ contains two nodes and at least one marked pole and is not unstable, which is a contradiction.
\end{proof}

\begin{remark}
The above lemma is crucial for us. It is the main reason why we are able to state the \hyperlink{thm:DRclosure}{Main theorem} only in terms  of a given \twr instead of having to say that there exists a \twdr that stabilizes to the given \twr.
\end{remark}

So far we established that stabilizing a \twdr yields a \twr. We now show that conversely every \twr admits a twist.

 \begin{proposition}\label{prop:twistEx}
Let $\TWR$ be a \twr of type $\mu$ compatible with a level graph $\lG$. Then there exists a partition $\eptn$ extending $\mu$, a level graph $\lG'$ and a \twdr $\TWDR$ of type $\eptn$ compatible with $\lG'$ such that $\TWDR$ is a twist of $\TWR$.
 \end{proposition}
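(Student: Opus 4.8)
The plan is to build the twist explicitly by \emph{marking} and \emph{inserting}: on each irreducible component we keep the given rational function but mark its critical points, and at every node where the inequality \eqref{eq:TWR} is strict we glue in a single rational bridge carrying a rational function of prescribed ramification. First, for every vertex $v$ of $\dG$ set $\psf_v:=\stf_v$ and adjoin to $X_v$ a marked point at each of the finitely many critical points $c$ of $\stf_v$ that are neither zeros nor poles of $\stf_v$ nor nodes; each such point satisfies $\ord_c(d\stf_v)=\mult_c\stf_v-1>0$. These are the new marked points; together with the original ones (where $\ord_{x_k}(d\stf)=\ptn_k-1$) they determine a list of orders $\ord_{\mSC}(d\psf)$, and since the original orders form the associated partition $\dptn$ while the new ones are all positive, this list is a partition $\eptn$ extending $\ptn$; that $\sum\eptn_k=2g-2$ follows from the Euler characteristic count, using that twisting does not change the genus.

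Next fix a node of $\stC$ joining $q_1\in X_{v_1}$ to $q_2\in X_{v_2}$ with $\ell(v_1)\ge\ell(v_2)$. By condition $(3)$ of \Cref{def:twr}, $q_1$ is never a pole of $\stf_{v_1}$, and $q_2$ is a pole only if $\ell(v_1)>\ell(v_2)$. If \eqref{eq:TWR} is an equality, one checks (again using that at most one branch is a pole) that exactly one branch is a pole and $\mult_{q_1}\stf_{v_1}=\mult_{q_2}\stf_{v_2}$, so the node is already of \twdr--type and we leave it alone. Otherwise we replace the node by a single $\PP^1$, call it $C$, attached at new nodes $q_1'\sim q_1$ and $q_2'\sim q_2$. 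Writing $m_i:=\mult_{q_i}\stf_{v_i}$, choose a coordinate $z$ on $C$ with $q_1'=\{z=\infty\}$ and put
\[
\psf_C = z^{m_1}+z^{m_2}\ \text{ if }q_2\text{ is a pole of }\stf_{v_2}\ (\text{then }q_2'=\{z=0\}),\qquad \psf_C = z^{-m_1}+z^{m_2}\ \text{ otherwise}\ (\text{then }q_2'=\{z=0\}).
\]
In both cases $\psf_C$ has a pole of order $m_1$ at $q_1'$, so $\ord_{q_1'}(d\psf_C)+\ord_{q_1}(d\stf_{v_1})=-2$; in the first case $\ord_{q_2'}(d\psf_C)=m_2-1$ and $\ord_{q_2}(d\stf_{v_2})=-m_2-1$, in the second $\ord_{q_2'}(d\psf_C)=-m_2-1$ and $\ord_{q_2}(d\stf_{v_2})=m_2-1$, so the matching at $q_2'\sim q_2$ sums to $-2$ as well. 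The computation $d(z^{m_1}+z^{m_2})=z^{m_2-1}(m_1z^{m_1-m_2}+m_2)\,dz$ and its analogue show that all remaining critical points of $\psf_C$ are simple and lie away from $q_1',q_2'$ (there is at least one, since the inequality is strict), and we mark them; since $C$ carries no marked zeros or poles, forgetting these critical points makes $C$ an unstable $\PP^1$ that is contracted under stabilization, recovering the original node.

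Finally we equip the dual graph $\lG[\Gamma']$ of the resulting curve with a level function: rescale the original level function ($\ell\mapsto K\ell$ for $K$ large, which leaves the induced order unchanged), and give each inserted $C$ a fresh level, strictly between $\ell(v_2)$ and $\ell(v_1)$ in the first case above and strictly below both $\ell(v_1)$ and $\ell(v_2)$ in the second; this is both forced by and compatible with condition $(3)$ of the \twdr definition at the two nodes of $C$. One then checks directly that $\TWDR$ is a \twdr of type $\eptn$ compatible with $\lG[\Gamma']$: condition $(1)$ holds componentwise, condition $(2)$ was arranged at every node (old and new), and condition $(3)$ holds at untouched nodes by the equality analysis and at bridge nodes by the choice of level; in particular $\lG[\Gamma']$ has no horizontal edges. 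By construction the only components contracted by stabilization are the inserted bridges, on all other components $\psf_v=\stf_v$ with the marked zeros and poles untouched, and the restriction of the level function of $\lG[\Gamma']$ induces the order of $\lG$; hence the stabilization of $\TWDR$ is the given $\TWR$, as required. (Our construction introduces no rational tails, so the rational-tail part of \Cref{lemma:LocMax} never intervenes here.)

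The main obstacle is the bookkeeping of the level graph $\lG[\Gamma']$: one must place each inserted bridge at a level that is simultaneously compatible with condition $(3)$ on \emph{both} of its new nodes and such that deleting the bridges returns exactly the level graph $\lG$ of the \twr — this is precisely why the levels of the original vertices are rescaled first, creating the necessary room. The only other point requiring care, but entirely elementary, is the verification that the explicit functions $\psf_C$ realise the required ramification orders at $q_1',q_2'$ while all auxiliary critical points are simple and disjoint from the nodes.
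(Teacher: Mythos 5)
Your proof follows essentially the same construction as the paper: mark the unmarked critical points on each original component, insert a rational bridge at every node where the inequality in condition \eqref{eq:TWR} is strict, equip the bridge with a suitable rational function, and refine the level function so that the bridge fits between or below the two old levels. The paper only asserts the existence of the bridge function and introduces ``intermediate levels'' $i^{\pm}$ by hand, whereas you give an explicit formula for the bridge function and achieve the same level refinement by rescaling $\ell \mapsto K\ell$; both are fine. One clerical slip worth fixing: in the case where neither $q_1$ nor $q_2$ is a pole of $\stf$, your function $\psf_C = z^{-m_1} + z^{m_2}$ has a pole of order $m_1$ at $z=0$ and a pole of order $m_2$ at $z=\infty$, so with your stated normalization $q_1'=\{z=\infty\}$, $q_2'=\{z=0\}$ the pole orders at $q_1'$ and $q_2'$ come out as $m_2$ and $m_1$, not $m_1$ and $m_2$ as you claim; either swap the labels (take $q_1'=\{z=0\}$, $q_2'=\{z=\infty\}$) or use $\psf_C=z^{m_1}+z^{-m_2}$. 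With that corrected the remainder of the verification goes through as you describe.
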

 \begin{proof}
As a first step we mark all zeros of $d\stf$, that are not already marked points in $\umS$, and declare them as critical points in $\umC$. To turn $\TWR$ into a \twdr we are going to insert rational curves at all nodes with $\ord_{\mN^+} d\stf + \ord_{\mN^-} d\stf>-2$.
At such a node we insert a rational curve $(\PP^1,(0,1,\infty))$ by identifying $\mN^+$ and $\mN^-$ with $0$ and $\infty$, respectively. We  equip $\PP^1$ with a rational function $f'$  such that $df'$ has  order $-\ord_{\mN^+} df-2$ at $0$ and order $-\ord_{\mN^-} df-2$ at $\infty$ and no other poles. If both  $\ord_{\mN^+} df$ and $\ord_{\mN^-} df$ are positive, $f'$ has poles at $0$ and $\infty$ and thus such $f'$ exists. If $\ord_{\mN^-} df<0$, then $\ord_{\mN^+} d\stf>0$ by the compatibility with the level graph of a \twr. In particular $f'$ has a pole at $0$ and a zero at $\infty$.
 Note that in this case such a rational function exists since
 \[
\mult_{0} f' =\ord_{q^+} df+1>  -\ord_{q^-} df-1=\mult_{\infty}f'.
 \]
 We declare all remaining critical points of $f'$ to be marked points in $\umC$. Note that in both cases no zero of $f'$ is marked as a point in $\umS$.

 We let $(\psC,\umSC)$ be the resulting curve obtained by inserting the rational curves, where we set $\umSC:=(\umS,\umC)$. Furthermore by construction there exists a collection of rational functions $\psf$ on $\psC$.
Our next goal is to describe the level graph on $\psC$.
We start with the level function for $\lG$ and add additional levels in between two levels $i$ and $i-1$ and call those {\em intermediate levels}. For each level there are two intermediate levels, the level $i^{+}$ is in between $i+1$ and $i$ and the level $i^-$  is in between $i$ and $i-1$. Note that we have $(i-1)^+=i^-$ by construction. We then assign the level of an inserted rational curve as follows.

If $\mN^+$ is a pole of $\stf$, then the level of the inserted $\PP^1$ needs to be larger than $\ell(\mN^+)$ and otherwise the level of the inserted $\PP^1$ needs to be lower than $\ell(\mN^-)$. The same is true for $\mN^-$. Note that $\mN^+$ and $\mN^-$ cannot both be poles of $\stf$ because of the compatibility with the level graph in \Cref{def:twr}.
Thus if $\mN^+$ is a pole, we assign the level to be $\ell(\mN^+)^+$ and if $\mN^-$ is a pole we assign it to be $\ell(\mN^-)^+$. If both $\mN^+$ and $\mN^-$ are zeros of $\stf$ we declare the level to be $\min(\ell(\mN^+),\ell(\mN^-))^-$.
We denote $\lG'$ the resulting level graph on $\psC$ and by construction $\TWDR$ is compatible with $\lG'$.

To summarize, we have constructed a \twdr $\TWDR$  where $\umSC=(\umS,\umC)$ and $\umS$ consists of  zeros and  poles of $\psf$ and $\umC$ contains all remaining critical points of $\psf$ which are not nodes of $\psC$, together with a level graph $\lG'$ for $\psC$. We let $\eptn$ be the partition  consisting of the order of vanishing of $df$ at all marked points $\umSC$. Then $\eptn$ extends $\ptn$ and furthermore $\TWDR$ is of type $\eptn$ and compatible with $\lG'$. After forgetting all additional critical points $\umC$ and stabilizing we recover $\TWR$.
 \end{proof}

Let $\TWDR$ be a twist of $\TWR$.
As a final ingredient we need to compare the evaluation morphisms of $\stf$ and $\psf$.
The stabilization morphism $(\psC,\umSC)\to(\stC,\umS)$ induces a surjective map
$p:H_1(\lG',\umZ)\to H_1(\lG,\umZ)$.
Recall here that by abuse of notation we call the legs of $\lG'$ and of $\lG$ corresponding to the marked zeros of $\psf$ and $\stf$ both $\umZ$.

\begin{lemma}\label{lm:evCD}
The inclusion $p(\LVLF(\lG'))\subseteq \LVLF(\lG)$ holds and furthermore the diagram
\[
\begin{tikzcd}
\LVLF(\lG')\ar[d,"p"] \ar[dr,"{\ev}^{(i)}_{\psf}"] & \\
\LVLF(\lG) \ar[r,swap,"\ev_{\stf}^{(i)}"] & \CC
\end{tikzcd}
\]
commutes.
In particular, $\ev_{\stf}^{(i)}$ vanishes identically  if and only if ${\ev}_{\psf}^{(i)}$ vanishes identically.
\end{lemma}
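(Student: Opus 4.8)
The plan is to realize $p$ topologically and then track both the level filtration and the evaluation map through it. Regard $\lG$ and $\lG'$ as $1$-dimensional cell complexes and let $\bar\phi\colon\lG'\to\lG$ be the continuous collapse map underlying the stabilization morphism $\phi\colon\dG'\to\dG$: it sends each rational bridge of $\psC$ (a subdivided edge of $\lG'$) onto the corresponding edge of $\lG$ and collapses each rational tail (a tree) to its attaching vertex. By \Cref{lemma:LocMax} no marked zero of $\psf$ lies on an unstable component, so $\bar\phi$ restricts to a bijection on the legs $\umZ$ and induces $p=\bar\phi_*$. Two combinatorial facts, both coming from \Cref{lemma:LocMax}, will be used repeatedly: (a) no interior vertex of a rational chain is a local maximum for the level order, so the part of a rational bridge lying in $\rleq{\lG'}$ is a connected sub-arc; in particular if both endpoints lie in $\rleq{\lG'}$ then so does the whole bridge; and (b) consequently no inserted curve sits at a level occurring in $\lG$, so $\bar\phi$ carries the vertices of $\lG'$ of level exactly $i$ onto the level-$i$ vertices of $\lG$.

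First I would upgrade the stated inclusion to the equality $p(\LVLF(\lG'))=\LVLF(\lG)$, which is what the final equivalence needs. For ``$\subseteq$'', a class $[\gamma]\in\LVLF(\lG')$ is represented by paths in $\rleq{\lG'}$; by fact (a) any excursion such a path makes into a rational bridge or tail that it does not traverse completely can be homotoped away inside $\rleq{\lG'}$, so after a homotopy rel $\umZ$ the paths traverse bridges end to end or avoid them, and $\bar\phi$ maps them into $\rleq{\lG}$. For ``$\supseteq$'', a path in $\rleq{\lG}$ lifts through $\bar\phi$ by sending each edge $e$ with $\ell(e+)\le i$ to the corresponding bridge, which lies in $\rleq{\lG'}$ again by fact (a), and each vertex of level $\le i$ to the corresponding stable vertex.

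The core of the proof is the commutativity $\ev^{(i)}_{\psf}=\ev^{(i)}_{\stf}\circ p$ on $\LVLF(\lG')$. Since by \Cref{prop:evwell} both maps factor through homology, I would represent $[\gamma]$ by a reduced collection of paths in $\rleq{\lG'}$ as in the previous paragraph and then compare the restriction of $\gamma$ to $\req{\lG'}$ with the restriction of $\bar\phi_*\gamma$ to $\req{\lG}$. By fact (b) the level-$i$ vertices correspond under $\bar\phi$, and on each stable component $\psf_v=\stf_v$ and the node preimages $q^{\pm}_e$ are unchanged; moreover the extra legs of $\req{\lG'}$ at which $\gamma$ leaves the level-$i$ part are precisely the tops of the bridges descending out of level $i$, and $\bar\phi$ matches these with the extra legs of $\req{\lG}$, on which $\psf$ and $\stf$ take the same finite value. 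Hence the restricted paths correspond under $\bar\phi$ up to reparametrization, the two signed sums of function values coincide, and commutativity follows. The ``if and only if'' is then immediate: one direction from commutativity, the other from $p(\LVLF(\lG'))=\LVLF(\lG)$.

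The main obstacle is this last comparison of the two restrictions: one must choose the representative of $[\gamma]$ carefully and verify that passing to the level-$i$ subcomplex commutes with $\bar\phi$, which amounts to a precise matching of the extra legs of $\req{\lG'}$ and $\req{\lG}$ — the point where \Cref{lemma:LocMax} and the shape of a twist (a rational bridge descends out of the higher of its two stable endpoints, and an inserted curve is never at a level of $\lG$) enter. I expect the cleanest organization is to run the comparison through the excision/boundary presentation $\ev^{(i)}_{\psf}=\ev\circ\delta_i\circ g_i$ from the proof of \Cref{prop:evwell}: the maps $g_i$ and $\delta_i$ are natural in the cell complex, so $\bar\phi$ yields a commuting ladder, and the only thing left to check by hand is the compatibility of the two functionals $\ev$ on $H_0$, which is exactly $\psf_v=\stf_v$ on stable components.
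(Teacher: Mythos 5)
Your proposal takes essentially the same route as the paper: both arguments use \Cref{lemma:LocMax} to control the unstable components and then homotope a representative of $[\gamma]$ so that the restriction to level $i$ only meets stable vertices, where $\psf_v=\stf_v$. You are in fact more thorough in one respect: you prove the equality $p(\LVLF(\lG'))=\LVLF(\lG)$ rather than just the inclusion, and the equality is what the final ``if and only if'' actually requires.

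However, your fact (b) is not true as stated. A twist $\TWDR$ of $\TWR$ is an arbitrary \twdr stabilizing to $\TWR$, and nothing forbids an inserted $\PP^1$ from sitting at a level also occupied by some \emph{other} stable vertex of $\lG$: for instance a strictly monotone rational bridge between stable vertices of levels $0$ and $-2$ may contain an inserted component at level $-1$, while an unrelated stable vertex of $\lG$ also has level $-1$. (The construction of \Cref{prop:twistEx} happens to avoid this by using intermediate levels $i^{\pm}$, but \Cref{lm:evCD} must hold for every twist, in particular the one arising as a multi-scale limit in the necessity direction of the Main theorem.) Fortunately this does not break your proof, because the homotopy step already saves you: once $\gamma$ traverses bridges end-to-end inside $\rleq{\lG'}$ and otherwise avoids unstable chains, any bridge it traverses has both stable endpoints at level $\le i$, and since \Cref{lemma:LocMax} rules out interior local maxima of a bridge, every interior vertex has level strictly below $\max(j_a,j_b)\le i$. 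Hence the restriction of your chosen representative to $\req{\lG'}$ never meets an unstable level-$i$ vertex at all, which is exactly what you need in place of fact (b). This is also the content of the paper's argument that the restriction of $\gamma$ to any unstable level-$i$ component is null-homotopic. You should replace the appeal to fact (b) by this observation; with that substitution the argument is correct and matches the paper's.
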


\begin{proof}
The stabilization morphism removes some irreducible components but does not change the level of any stable component. Thus the top level can only go down under $p$, i.e. $ p(\LVLF(\lG'))\subseteq \LVLF(\lG)$.

Let $\gamma\in \LVLF(\lG')$. If the evaluation morphism $\ev_{\psf}^{(i)}(\gamma)$ has a contribution from a rational function $f_v$, then necessarily $\ell(v)=i$. Our goal is to show that the restriction of $\gamma$ to any unstable component $X_v$ of level $i$ is null-homotopic.
 Since the level graph $\lG'$ is purely vertical, $v$ has to be a local maximum for the level order on $\lG'$. Furthermore, $X_v$ cannot have two nodes by \Cref{lemma:LocMax}. This forces $X_v$ to be the end a rational tail, which does not have a marked zero, again by \Cref{lemma:LocMax}.
 But then the restriction of $\gamma$ to $X_v$ is null-homotopic. Thus $\gamma$ is homotopic to a path that does not cross any unstable top level component and thus $\ev_{f}^{(i)}([\gamma])=\ev_{\psf}^{(i)}([\gamma])$.

\end{proof}

\section{The proof of the Main theorem}

\begin{proof}[Proof of the {\hyperlink{thm:DRclosure}{Main theorem}}] We first prove necessity of the conditions; that is we need to show that given a marked curve $(X,\umS)$ in the closure of $\DR$ there exists a
\twr compatible with some level graph structure on the dual graph of $X$ with vanishing evaluation map.

Choose a holomorphic map $\phi:\Delta\to \Mgnbar$  from the unit disk $\Delta$ with $\phi(\Delta^*)\subseteq \DR$ and $\phi(0)=(\stC,\umS)$. We denote $\alpha$ the identification of $\DR$ and $\DRD\subseteq\PHB$. After shrinking $\Delta$, the image of $\alpha\circ\phi$ inside $\PHB$ is  contained in a stratum $\PStra[(\eptn)]$ for some partition $\eptn$ extending $\mu$. Here $\PStra[(\eptn)]$ denotes an unmarked stratum, i.e. not all zeros of the differential form are marked points.
We are now going to lift $\alpha\circ\phi$ to the moduli space of multi-scale differentials $\PP \Xi\Mgn[g,n'](\eptn)$.

{\em Claim:} After a base change $\Delta^*\to\Delta^*, z\mapsto z^k$ for a suitable $k$, we can construct a lift $\phi':\Delta\to \PP \Xi\Mgn[g,n'](\eptn)$ of $\alpha\circ\phi$ such that $\phi'(\Delta^*)\subseteq \DRc$.

{\em Proof of the claim.}
Since the number of critical points is constant, we can locally mark the critical points over $\Delta^*$.
Due to monodromy when encircling the origin in $\Delta$, those local sections might not be well-defined and might permute the critical points non-trivially. We then choose $k$ divisible enough so that the $k$-th power of the permutation is trivial. If we denote $\psi:\Delta^*\to \Delta^*, z \mapsto z^k$, then the family of curves corresponding to $\alpha\circ\phi\circ\psi$ admits global sections marking the critical points and thus there exists a map
$\phi':\Delta\to \PP \Xi\Mgn[g,n'](\eptn)$ such that $\phi'(\Delta^*)\subseteq \DRc$ and
 \[
\begin{tikzcd}
& \PP \Xi\Mgn[g,n'](\eptn)\ar[d,"\pi"] \\
\Delta\ar[ur,"\phi'"]\ar[r,swap,"\alpha\circ\phi\circ\psi"] & \Mgnbar
\end{tikzcd}
 \]
 commutes. Now the claim is proven.

We denote $(\psC,\umSC,\twD):=\phi  '(0) \in \PP \Xi\Mgn[g,n'](\eptn)$. Since $\pi(\psC,\umSC,\twD)=\phi(0)= (\stC,\umS)$, we conclude that  $\stC$ is the stabilization of $\psC$.  By \Cref{prop:BDDRc}, we can write $\twD=d\psf$ such that $\TWDR$ is a \twdr of type $\eptn$ compatible with a level graph $\lG'$ and $\ev_{\psf}^{(i)}=0$ for all $i$.
By \Cref{lm:ContrTW} the stabilization $\TWR$ of $\TWDR$ is a \twr of type $\mu$ compatible with $\lG$, where $\lG$ is the level graph for $\stC$ obtained by stabilizing $\lG'$. We furthermore conclude that $\ev^{(i)}_{\stf}=0$ for all $i$ by \Cref{lm:evCD}.
We have thus proven the necessity part.

Now we proceed with sufficiency. Here it suffices to prove that given a \twr $\TWR$ of type $\mu$ compatible with a level graph $\lG$ such that the evaluation morphism vanishes identically, we can construct a one-parameter family of smooth curves in $\DR$ degenerating to $\TWR$. By \Cref{prop:twistEx} there exists a twist $\TWDR$ of $\TWR$ which is of type $\eptn$ for some partition extending $\ptn$ compatible with some level graph $\lG'$. Again, by \Cref{lm:evCD} we have $\ev_{\psf}^{(i)}=0$ for all $i$ and thus $\TWDR\in\overline{\DRDc}\subseteq \PP\Xi\Mgnbar[g,n'](\eptn)$ by \Cref{prop:BDDRc}.
Therefore, there exists a holomorphic map $\phi':\Delta\to \overline{\DRDc}$ which is generically contained in $\DRDc$ and $\phi'(0)=\TWDR$. Composing with $\pi:\PP\Xi\Mgnbar[g,n'](\eptn)\to\Mgnbar[g,n']$ we obtain a map $\phi:=\pi\circ\phi':\Delta\to\overline{\DRD}$ which is generically contained in $\DRD$ and $\phi(0)=(\stC,\umS)$.

\end{proof}

\section{Admissible covers versus twistable rational functions}\label{sec:Adm}
In this section we discuss the relationship between using admissible covers and twistable rational functions to describe the closure of double ramification loci.
 As a consequence we see that the existence of an admissible cover can be encoded in the level graph of a \twr, a result that to the best of our knowledge is new.

We start by setting up our notation for admissible covers. As before let $\ptn$ be a partition of zero.
The {\em Hurwitz scheme} $\Hur$ parameterizes maps $f:X\to\PP^1$ with ramification over $0$ and $\infty$ prescribed by $\mu$, and simple ramification otherwise. We mark on $X$ all preimages of branch points of $f$, see for example \cite{JKK},  where the same variant of admissible covers is used.

We now recall admissable covers, see for example the original reference \cite{HM}. Note that here we do not require the ramification to be simple.
\begin{definition}[Admissible cover]
Let $(D,\umS)$ be a stable $n$-pointed nodal curve of genus $0$. An {\em admissible cover} $f:C\to D$ of degree $d$ is a finite degree $d$ morphism such that
\begin{itemize}
\item $C$ is nodal and every node of $C$ maps to a node of $D$,
\item the restriction of $f$ to $D_{gen}$ is \'{e}tale, where $D_{gen}$ is the complement of the marked zeros and nodes,
\item if a node of $C$ identifies $x$ and $y$, then
\[
\mult_x f=\mult_y f,
\]
where $\mult_x f-1$ is the ramification index of $f$ at $x$.
\end{itemize}
We additionally require that the marked points in $C$ are exactly  the preimages of marked points in $D$. The {\em type} of $f$ records ramification profile of $f$ at all marked points which are not nodes.
\end{definition}

We let $\Adm$ be the stack of all admissible covers of type $(\mu,1,\ldots,1)$ where we additionally mark all preimages of marked points in $D$. Then $\Adm$ is a proper Deligne-Mumford stack and $\Hur$ is a dense open substack.
Note that $\Adm$ is in general not smooth, but its normalization is, see for example \cite{ AVC}, where the authors also give a modular interpretation of the normalization in terms of twisted $S_d$-covers. Our notation differs from  (loc. cit.) in that we  mark all preimages of the marked points of $D$.

Since $\Hur$ is an open dense substack of $\Adm$, we have the following.

\begin{proposition}
A stable $n$-pointed curve $(X,\umS)$ is contained in the closure of $\DR$ if and only if there exists a marked stable curve ($X',\umS')$ such that
\begin{itemize}
\item there exists an admissible cover  $X'\to D$ of type $(\mu,1,\ldots,1)$,
\item the stabilization of the prestable curve $(X',f^{-1}(\{0,\infty\}))$ is isomorphic to $(X,\umS)$.
\end{itemize}
\end{proposition}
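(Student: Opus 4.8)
The plan is to realise both sides of the equivalence through a single morphism out of the stack of admissible covers, and then to deduce the statement from properness of $\Adm$. First I would construct the morphism of Deligne--Mumford stacks
\[
\Phi\colon\Adm\longrightarrow\Mgnbar
\]
sending an admissible cover $f\colon X'\to D$ of type $(\mu,1,\dots,1)$ to the stabilisation of the prestable pointed curve $(X',f^{-1}(\{0,\infty\}))$, where $0,\infty\in D$ are the two marked points singled out by the type, i.e.\ the ones over which $f$ has ramification profile prescribed by $\mu$. Since marked points are smooth and $f^{-1}(\{0,\infty\})$ consists of exactly $n$ of them, matching the nonzero entries of $\mu$, this is a genuine family of prestable $n$-pointed genus $g$ curves over $\Adm$, and composing with relative stabilisation yields $\Phi$. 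As $\Adm$ is proper and $\Mgnbar$ is separated, $\Phi$ is proper, hence $\Phi(\Adm)$ is closed in $\Mgnbar$. By construction the right--hand side of the Proposition is precisely the set of points of $\Mgnbar$ lying in the image of $\Phi$, so the whole statement reduces to proving $\Phi(\Adm)=\overline{\DR}$.

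Next I would pin down the image of the open dense substack $\Hur\subseteq\Adm$. Over $\Hur$ the source $X'$ is smooth, $D=\PP^1$, and retaining only the preimages of $0$ and $\infty$ produces a smooth pointed curve equipped with a rational function $f$ satisfying $\ord_{x_k}f=\mu_k$ for all $k$; thus $\calO_{X'}\bigl(\sum_k\mu_k x_k\bigr)\simeq\calO_{X'}$ and $\Phi$ of such a point lies in $\DR$, so $\Phi(\Hur)\subseteq\DR$. The one substantive input is the converse density assertion $\overline{\Phi(\Hur)}=\overline{\DR}$: on a general curve of $\DR$ --- in every irreducible component --- the unique, up to scaling, rational function with divisor $\sum_k\mu_k x_k$ has only simple ramification at pairwise distinct points away from $0$ and $\infty$. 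This is classical Hurwitz theory: the simple--branching condition is open, and a Riemann--Hurwitz count giving $\dim\Hur=\dim\DR$ shows that the corresponding locus is dense in $\DR$. Granting this, $\overline{\Phi(\Hur)}=\overline{\DR}$.

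It remains to assemble the pieces. Since $\Hur$ is dense in $\Adm$ and $\Phi$ is continuous with closed image, $\Phi(\Adm)=\Phi(\overline{\Hur})=\overline{\Phi(\Hur)}=\overline{\DR}$; here the middle equality holds because $\Phi(\overline{\Hur})\subseteq\overline{\Phi(\Hur)}$ by continuity, while $\overline{\Phi(\Hur)}\subseteq\Phi(\Adm)$ since $\Phi(\Adm)$ is closed and contains $\Phi(\Hur)$. Reading this off gives both implications of the Proposition: the ``if'' direction is the inclusion $\Phi(\Adm)\subseteq\overline{\DR}$, and for the ``only if'' direction any point of $\Adm$ in the fibre of $\Phi$ over a given $(X,\umS)\in\overline{\DR}$ furnishes an admissible cover $X'\to D$ of type $(\mu,1,\dots,1)$ whose associated prestable curve $(X',f^{-1}(\{0,\infty\}))$ stabilises to $(X,\umS)$. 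I expect the density step $\overline{\Phi(\Hur)}=\overline{\DR}$ to be the only genuine obstacle; everything else is formal, resting on the already cited properness of $\Adm$ and density of $\Hur$ in $\Adm$ and on $\Phi$ being a composition of standard moduli morphisms. One could alternatively present the ``only if'' direction by lifting a one--parameter family in $\DR$ degenerating to $(X,\umS)$ into $\Hur$ after a generically finite base change and extending it over the central fibre by the valuative criterion for the proper stack $\Adm$, but this still rests on the same dominance of $\Hur\to\DR$.
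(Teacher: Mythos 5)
Your argument is correct and is essentially a more detailed, more carefully packaged version of the paper's brief proof: both hinge on (a) the generic rational function in $\DR$ having only simple ramification, giving dominance of $\Hur$ onto $\DR$, and (b) density of $\Hur$ in the proper stack $\Adm$. The paper phrases this by lifting a one-parameter family in $\DR$ into $\Hur$ and invoking properness of $\Adm$, which is exactly the alternative you sketch at the end; your main presentation via the morphism $\Phi\colon\Adm\to\Mgnbar$ with closed image is the same argument made globally rather than familywise.
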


\begin{proof}
Generically a rational map in a double ramification locus has simple ramification and thus we can lift a family in $\DR$ to $\Hur(\sigma)$ where $\sigma=(\mu,1,\ldots,1)$. Since $\Hur$ is dense in $\Adm$, we get the desired result.
\end{proof}

We have thus described the closure in two seemingly different ways, with admissible covers and with twistable rational functions.
As a result we obtain a correspondence between the existence of  a suitable admissible cover on a prestable model of $X$  and the existence of  a certain \twr on $X$.
In general this correspondence is far from being one-to-one.

\begin{example}[Twistable rational functions and admissible covers do not determine each other]\label{ex:Cherry}
Even  on a smooth curve $X$ there can be multiple admissible covers. For example we can attach unstable components to $X$ such that the resulting prestable curve $X'$ curve maps to the union of two rational curves attached at a single node and $X$ is the stabilization of $X'$.

On the other hand, given a fixed admissible cover there can be multiple level graphs compatible with it. For example we can construct an admissible cover on a ``cherry level graph'', see \Cref{fig:Cherry}. The two level graphs $\lG_1$ and $\lG_2$ are obtained from each other by tilting the lower levels components and are supported on the same dual graph $\Gamma$.
Suppose now we have a \twr $\TWR$ such that all marked zeros are on the top level component. In this case the vanishing of the evaluation morphism of $\lG_1$ for a \twr on $X$ is equivalent to the vanishing on $\lG_2$.
So given any admissable cover on $X$ we can then construct two different \twrs, one on $\lG_1$ and one on $\lG_2$.
 By choosing different genera for the irreducible components on the lower levels one can make sure that $\lG_1$ and $\lG_2$ are not isomorphic level graphs.
\begin{figure}
\centering
\includegraphics[scale=1.2]{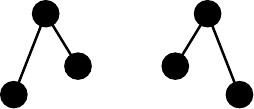}
\caption{The cherry graph from \Cref{ex:Cherry} with two level graphs $\lG_1$ and $\lG_2$}
\label{fig:Cherry}
\end{figure}
\end{example}

It appears very difficult to try to approach the correspondence between admissable covers and \twrs directly, since the combinatorial difficulties are very significant. Given a \twr one needs to insert unstable components in order to obtain a finite morphism  but there are many different ways one can add unstable components. And in the other direction, given an admissable cover one needs to find a level graph on a given dual graph such that the evaluation morphism vanishes. The number of possible level graphs on a given dual grows exponentially, see for example \cite{diffstrata} for the problem of enumerating all possible level graphs.

The correspondence can be generalized to admissible covers with arbitrary fixed ramification multiplicities. The proof remains the same, at least as long all poles of $\stf$ are marked. In the case where the poles of $\stf$ are marked it doesn't seem possible to state the theorem purely on the stable curve $X$ but one can only phrase it in terms of a prestable model. In general the bookkeeping is tedious since one needs to remember which branch points lie in the same fiber and depending on that the evaluation morphisms changes. We leave the details to the reader.

\section{Examples}\label{sec:ex}
We now illustrate the main theorem in a few examples and showcase some of the features of \twrs.
We will use the following conventions to depict level graphs. The legs denote the marked zeros and poles and critical points of the rational functions. Dashed legs correspond to marked critical points which are not marked zeroes or poles in $\umS$.
The label associated to a half-leg is the order of vanishing of the exact differential $df$ (and {\em not} the order of the rational function $f$) at the corresponding marked point. The label of a vertex is the genus of the normalization of the corresponding irreducible component. If a vertex is unlabeled, it corresponds to an irreducible component of genus zero.

 The decorations on the edges are  the numbers $\ord_{\mN^+} df$ and $\ord_{\mN^-} df$ respectively. The level order is implicit in the figures: the highest vertices correspond to the top level, lower levels are drawn below and vertices of  the same height are of the same level.
From now on $\TWR$ always denotes a collection of rational functions on a stable curve such that $(\stC,\umS)$ is contained in the closure of some \dr $\DR$ and $\TWR$ is a \twr for some partition $\eptn$ extending $\mu$ compatible with some level graph $\lG$.

\begin{example}[Unmarked zeros]\label{ex:unm}
As a first example we consider a degeneration of $\DRg[2](1^3,-3)$, depicted in \Cref{fig:dollar}.
\begin{figure}[H]
\begin{subfigure}[c]{0.4\textwidth}
\centering
\includegraphics[scale=1]{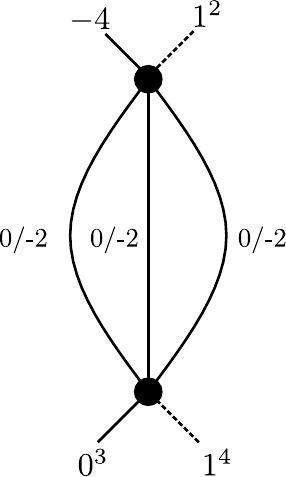}
\caption{Level graph of a dollar curve}
\label{fig:dollar}
\end{subfigure}
~
\begin{subfigure}[c]{0.6\textwidth}
\centering
\includegraphics[scale=0.5]{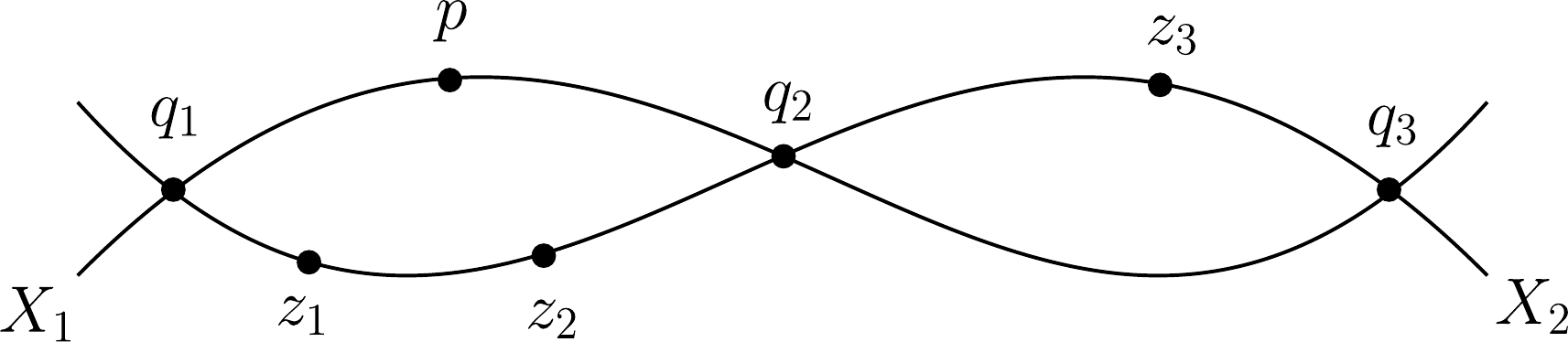}
\caption{A dollar curve}
\end{subfigure}
\caption{The dollar curve from \Cref{ex:unm}}
\end{figure}

 It is an example of a ``dollar curve", which we discuss more thoroughly in \Cref{ex:Dollar}.
We note that the top level component does not contain any marked zeros. In particular we see that both types of components mentioned in \Cref{rem:unmarked} do appear.

The evaluation morphism can be computed as follows. Let $\gamma_1$ be the path $(X_1,q_2,X_2,q_1,X_1)$ passing once through $q_1$ and $q_2$ and similarly let $\gamma_2$ be the path $(X_1,q_3,X_2,q_2,X_1)$ passing through $q_2$ and $q_3$ once.
Then
\[
\begin{split}
\im \ev_{f}^{(0)}=\langle \ev_{f}^{(0)}(\gamma_1), \ev_{f}^{(0)}(\gamma_2)\rangle,\\
\ev_{f}^{(0)}(\gamma_1)=f_1(q_2^+)-f_1(q_1^+),\\
\ev_{f}^{(0)}(\gamma_2)=f_1(q_3^+)-f_1(q_2^+)
\end{split}
\]
and the vanishing of the evaluation morphism is equivalent to $f_1(q_1^+)=f_1(q_2^+)=f_1(q_3^+)$, i.e. $\{q_1^+, q_2^+, q_3^+\}$ lie in the same fiber of $f_1$.
Note that the evaluation morphism automatically vanishes at level $-1$ in this example since there are no non-trivial relative paths contained in level $-1$.
Note that such a rational function $f_1:X\to\PP^1$   always exists. Similarly on $X_2$ the rational function $f_2$ has three simple zeroes, three simple poles and four remaining simple branch points. Such a rational functions always exists as well.

\end{example}

\begin{example}[Level graphs with horizontal nodes]\label{ex:hor}

We have seen that level graphs for \twdrs have no horizontal nodes. The next example shows that the level graph of a \twr can have horizontal nodes.
We consider again a degeneration of $\DRg[2]{(1^3,-3)}$, see \Cref{fig:ex1}.

\begin{figure}[H]
\begin{subfigure}{0.9\textwidth}
\centering
\includegraphics[scale=0.8]{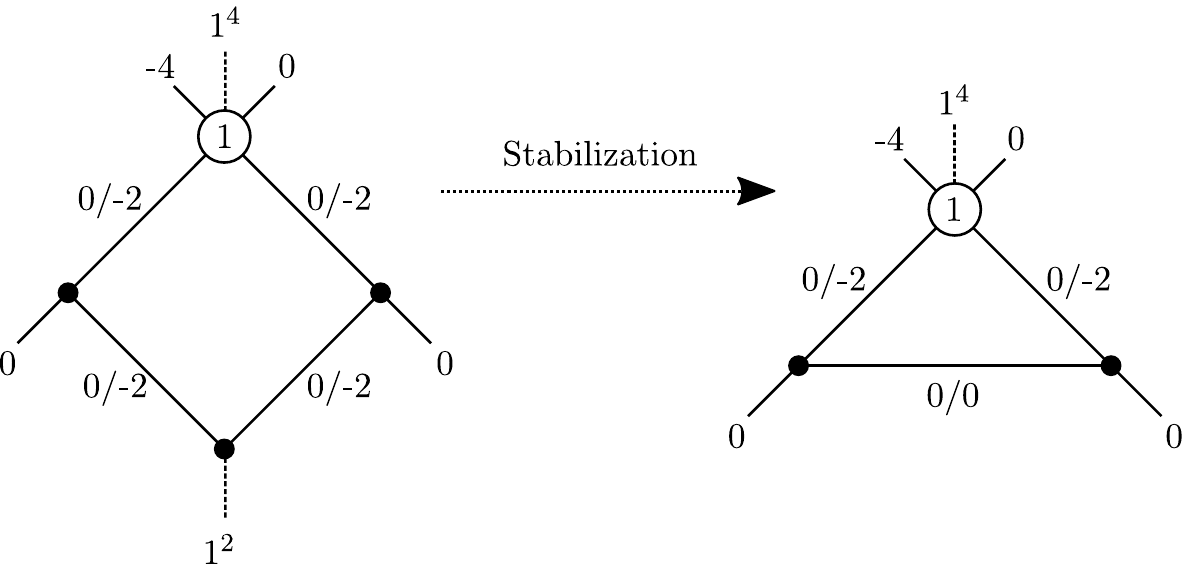}
\caption{The level graph $\lG'$ and its stabilization $\lG$}
\label{fig:ex1a}
\end{subfigure}

\begin{subfigure}[t]{.8\textwidth}
\centering
\includegraphics[scale=0.15]{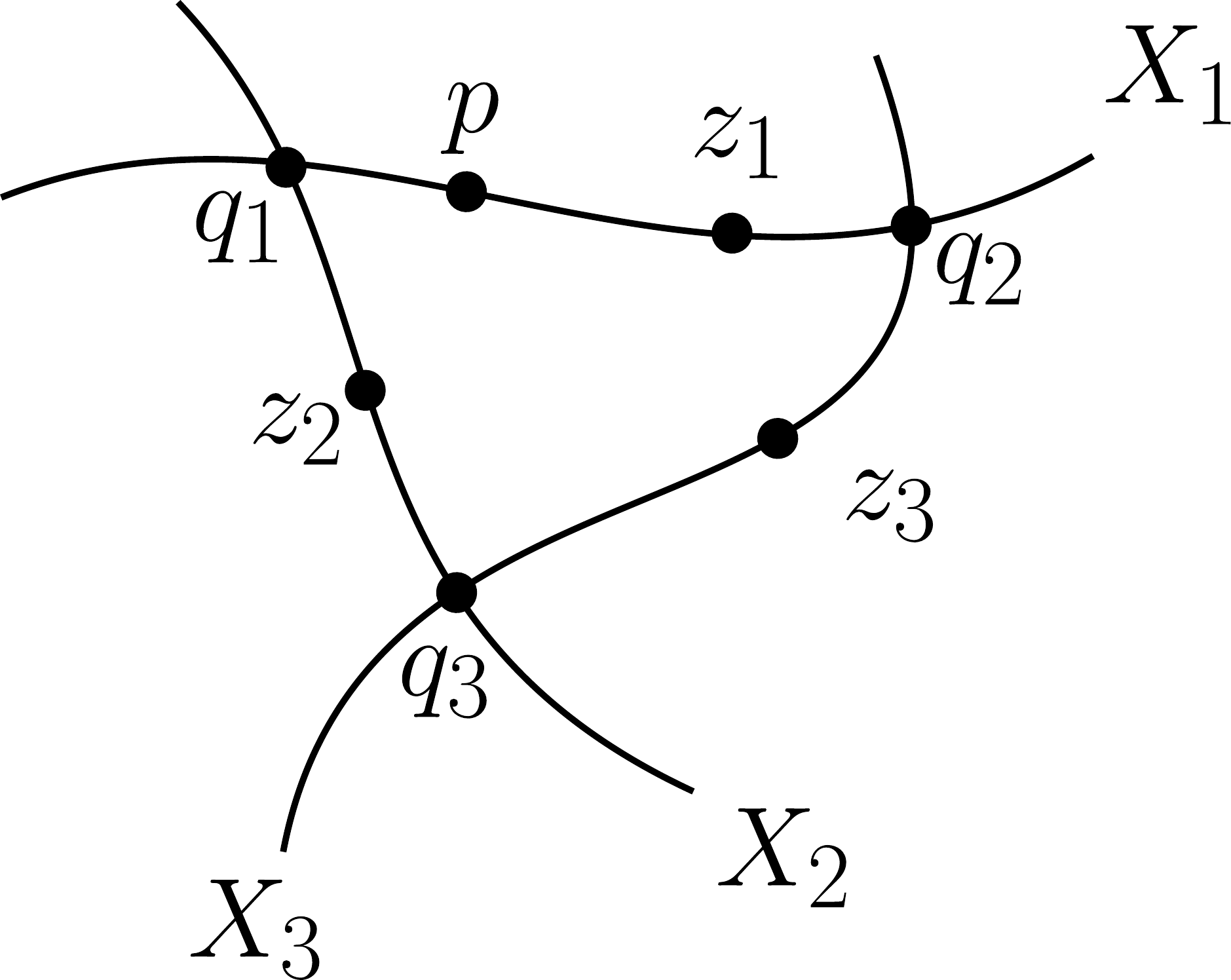}
\caption{A stable curve $X$ with level graph $\lG$}
\label{fig:ex1b}
\end{subfigure}
\caption{The dual graphs from \Cref{ex:hor}}
\label{fig:ex1}
\end{figure}
The level graph $\lG'$ is the level graph of a \twdr and $\lG$ the level graph of its stabilization. Note that $\lG'$ has three levels, under stabilization the bottom vertex is contracted and thus $\lG$ has only two levels with a horizontal edge.
Let $\TWR$ be the twist of  $\TWDR$ which is contained in the closure of $\DRg[2]{(1^3,-3)}$.
On the genus $1$ component $X_1$ the rational function $f_1:X_1\to \PP^1$ is a degree three map which has a pole of order three at $p$ and simple zero at $z_1$.
 For a fixed pointed elliptic curve $(X_1,z_1,p,q_1^+,q_2^+)$ there is a $h^0(\calO_{X_1}(3p-z_1))=2$-dimensional family of such maps.

We now compute the top level evaluations $\ev^{(0)}_{\stf},\,\ev^{(-1)}_{\stf}$.
Let $\gamma_{ij}\in H_1(\lG,\umZ)$ be the path $(z_i,v_i,v_j,z_j)$ connecting the marked zeros $z_i$ and $z_j$ by only passing through a single node. Then the image of the evaluation map is generated by the paths $\gamma_{ij}$, in particular
\[
\begin{split}
\im \ev^{(0)}_{\stf}&=\langle  \ev^{(0)}_{\stf}(\gamma_{12}),\ev^{(0)}_{\stf}(\gamma_{13})\rangle_{\ZZ},\\
\im \ev^{(-1)}_{\stf}&=\langle  \ev^{(-1)}_{\stf}(\gamma_{23})\rangle_{\ZZ}.
\end{split}
\]
We then compute
\[
\begin{split}
\ev^{(0)}_{\stf}(\gamma_{12})&= f_1(z_1)-f_1(q_1^+)=-f_1(q_1^+),\\
\ev^{(0)}_{\stf}(\gamma_{13})&= f_1(z_1)-f_1(q_2^+)=-f_1(q_2^+),\\
\ev^{(-1)}_{\stf}(\gamma_{23})&= f_2(q_3^+)-f_3(q_3^-).
\end{split}
\]
The vanishing of $\ev^{(0)}_{\stf}(\gamma_{12})$ and $\ev^{(0)}_{\stf}(\gamma_{13})$ then implies that
\[
(f_1)=z_1+q_1^++q_2^+-3p
\]
and thus $(X_1,(z_1,q_1^+,q_2^+,p))\in \DRg[1]{(1^3,-3)}$.

Finally, the vanishing of $\ev^{(1)}_{\stf}(\gamma_{23})$ implies
\[
f_2(q_3^+)-f_3(q_3^-)=0.
\]
Both $f_2$ and $f_3$ are isomorphisms to $\PP^1$ sending $z_2$ and $z_3$ to zero, respectively. In particular we have $f_2(q_3^+)\neq 0,f_3(q_3^-)\neq 0$ and by rescaling $f_3$  we can always achieve that $f_2(q_3^+)-f_3(q_3^-)=0$.
Thus in this case the only condition for $(X,\umS,\stf)$ to be in the closure of $\DRg[2](1^3,-3)$ is given by $(X_1,(z_1,q_1^+,q_2^+,p))\in \DRg[1]{(1^3,-3)}$.

\end{example}

For the following examples we will omit the remaining critical points from the level graphs and only draw the marked zeros and poles of $\stf$.
\begin{example}[The partial ordering is not determined by the vanishing orders]\label{ex:paOrder}
\begin{figure}[H]
\centering
\includegraphics[scale=1.1]{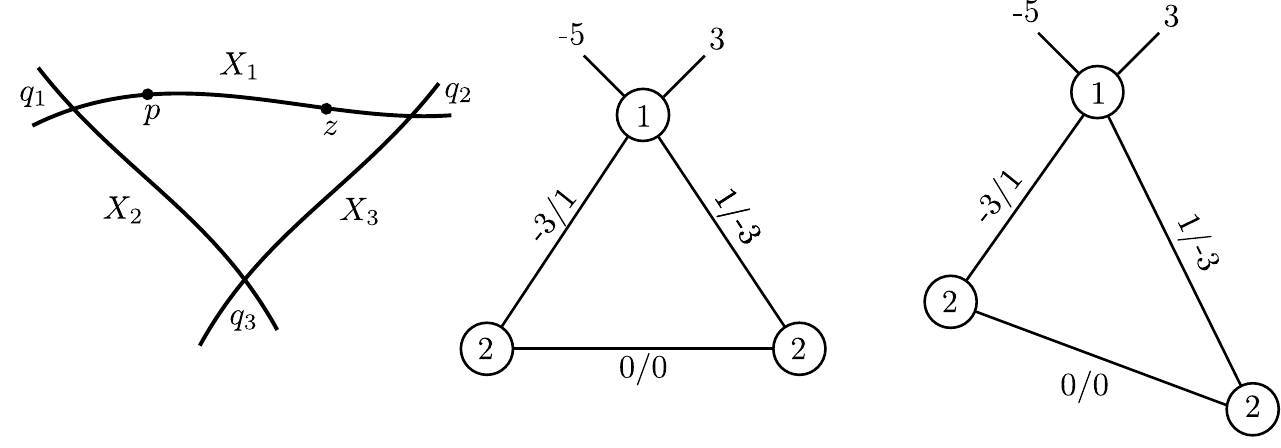}
\caption{The different partial orders in \Cref{ex:paOrder}}
\label{fig:paOrder}
\end{figure}

In the case of twisted differentials $(X,\eta)$, a twisted differential induces a unique partial ordering on the dual graph.
We will now see that the same is not true for \twrs. More precisely, we will construct a \twr $\TWR$ which lies in the boundary of  $\DRg[5](4,-4)$ such that there are two different level graphs, inducing a different partial order, such that $\TWR$ is compatible with both level graphs.
By Condition $(3)$ in \Cref{def:twr} this is only possible if there is a node such that the rational function is holomorphic at both preimages of the node.

We let $\lG_1$ be the dual graph in the middle of \Cref{fig:paOrder} and $\lG_2$ the one on the right.
 Note that the two partial orderings induced by $\lG_1$ and $\lG_2$ are different since $\lG_1$ has a horizontal node and $\lG_2$ has not.
To ensure that we can find a \twr compatible with both level graphs we now unravel the vanishing of the evaluation morphism for $\lG_1$ and $\lG_2$.
Let $\gamma_1$ be the path $(q_1,q_2,q_3)$. Then
\[
\begin{split}
\im \ev^{(0)}(f)(\lG_k)=\langle \ev^{(0)}_f(\lG_k)(\gamma_1)\rangle \text{ for } k=1,2\
\end{split}
\]
and the evaluation morphism vanishes automatically  at all other levels, since in both cases the level graph has no loops in lower levels and also no marked zeros on components of lower levels.
We then compute
\[
\begin{split}
\ev^{(0)}_f(\lG_1)(\gamma_1)&=\ev^{(0)}_f(\lG_2)(\gamma_1)=f_1(q_2^+)-f_1(q_1^+).
\end{split}
\]
Thus for both $\lG_1$ and $\lG_2$ on $X_1$ the rational function $f_1$ is of degree $4$, $q_1^+, q_2^+$ are in the same fiber of $f_1$, and $f_1$ is simply ramified at $q_1^+$ and $q_2^+$.
Note that such a rational function always exists, for example by \cite[Thm. 2.7]{PPHurwitz}.
Since the vanishing of the evaluation morphism is controlled by the top level, every \twr that is compatible with $\lG_1$ is automatically compatible with $\lG_2$.


%

\end{example}

\begin{example}[The evaluation morphism depends on the level graph]\label{ex:lvldep}

The notion of \twr does not depend on the full structure of a level graph. The level graph only comes into play when considering the evaluation morphism. This example
 shows that there can be two different level graphs inducing the same partial ordering but the condition imposed by the evaluation morphism is different.
In \Cref{fig:lvldep} we see a degeneration of $\DRg[8](4,-4)$.

\begin{figure}[H]
\centering
\includegraphics[scale=1.0]{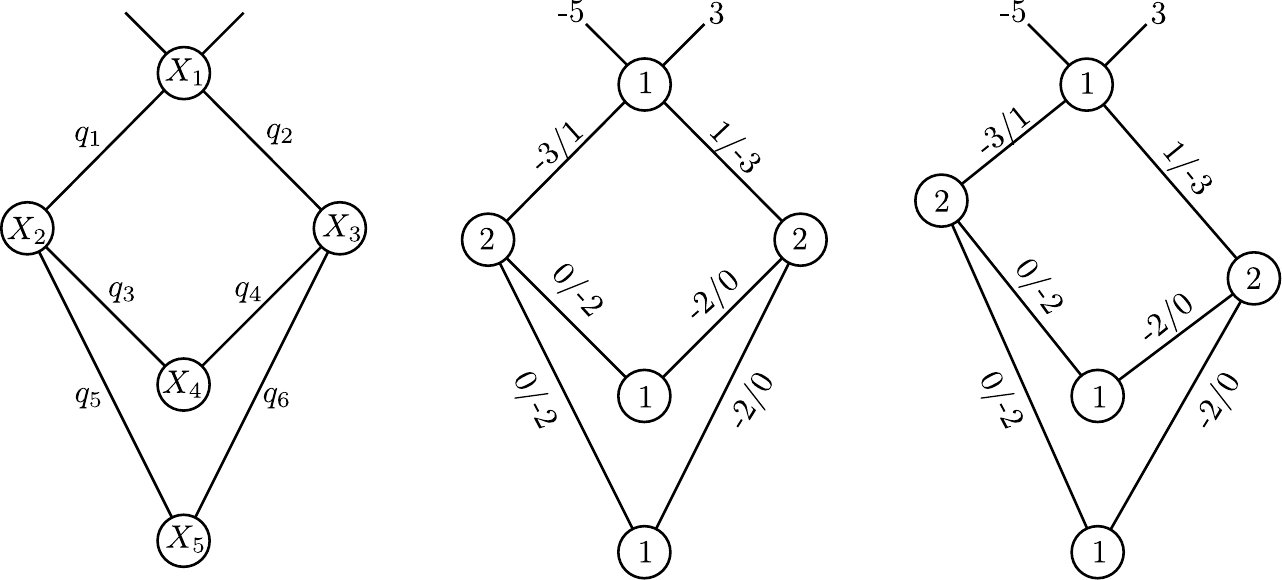}
\caption{The different level graphs for \Cref{ex:lvldep}}
\label{fig:lvldep}
\end{figure}
On the top level the vanishing of the evaluation morphism is independent of $k$ and equivalent to $f_1(q_2^+)=f_1(q_1^+)$. Exactly as in \Cref{ex:paOrder} we see that there exists  such a rational function.
For  level $-1$ we have
\[
\begin{split}
\ev^{(-1)}_f(\lG_1)(\gamma_2)&= f_2(q_3^+)-f_2(q_5^+)+f_3(q_6^+)-f_3(q_4^+),\\
\ev^{(-1)}_f(\lG_2)(\gamma_2)&= f_2(q_3^+)-f_2(q_5^+).
\end{split}
\]
Thus in the case of $\lG_2$, for the evaluation morphism to vanish the preimages of the nodes $q_3^+$ and $q_5^+$ have to be conjugate under the hyperelliptic involution, while for $\lG_1$ the condition $f_2(q_3^+)-f_2(q_5^+)=f_3(q_4^+)-f_3(q_6^+)$ can be satisfied without $q_3^+$ and $q_5^+$ being conjugate under the hyperelliptic involution.

In light of our discussion about admissible function the condition $f_2(q_3^+)-f_2(q_5^+)+f_3(q_6^+)-f_3(q_4^+)$ is, at first glance, surprising since it does not simply state that two points lie in the same fiber. But via an automorphism $z\mapsto z+c$ we can arrange that $f_2(q_3^+)=f_3(q_4^+)$. In which case $f_2(q_3^+)-f_2(q_5^+)+f_3(q_6^+)-f_3(q_4^+)$ simplifies to $f_2(q_5^+)=f_3(q_6^+)$.
Now those two conditions can be used to construct an admissible cover by mapping components of the same level to the same copy of $\PP^1$. Note that one has to attach additional unstable components for this.

\end{example}

Our next example is a thorough discussion of so-called {\em dollar curves}, i.e. two irreducible curves meeting transversally at three nodes. We study all possible level graphs and describe explicitly the conditions to be in the closure of \dr.  For curves of compact type, or for curves with at most one-separating node, one can extend the Abel-Jacobi map to describe the closure of \dr, see \cite{GZ}. Thus dollar curves are a natural next step to illustrate our techniques.

\begin{example}[``Dollar curves"]\label{ex:Dollar}
We let $X=X_1\cup X_2$ be a stable curve in the boundary of $\DR$ with one component $X_1$ of genus $g_1$ and the other component $X_2$ of genus $g_2:=g-g_1-2$ meeting at three nodes $q_1, q_2, q_3$, see \Cref{fig:Dollard}.
\begin{figure}
\begin{subfigure}{.3\textwidth}
  \centering
  \includegraphics[scale=1.5]{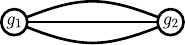}
  \label{fig:Dollara}
\end{subfigure}
\begin{subfigure}{.15\textwidth}
  \centering
  \includegraphics[scale=1.5]{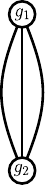}
  \label{fig:Dollarb}
\end{subfigure}
\begin{subfigure}{.2\textwidth}
  \centering
  \includegraphics[scale=1.5]{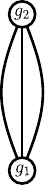}
  \label{fig:Dollarc}
\end{subfigure}~
\begin{subfigure}{.2\textwidth}
  \centering
  \includegraphics[scale=0.2]{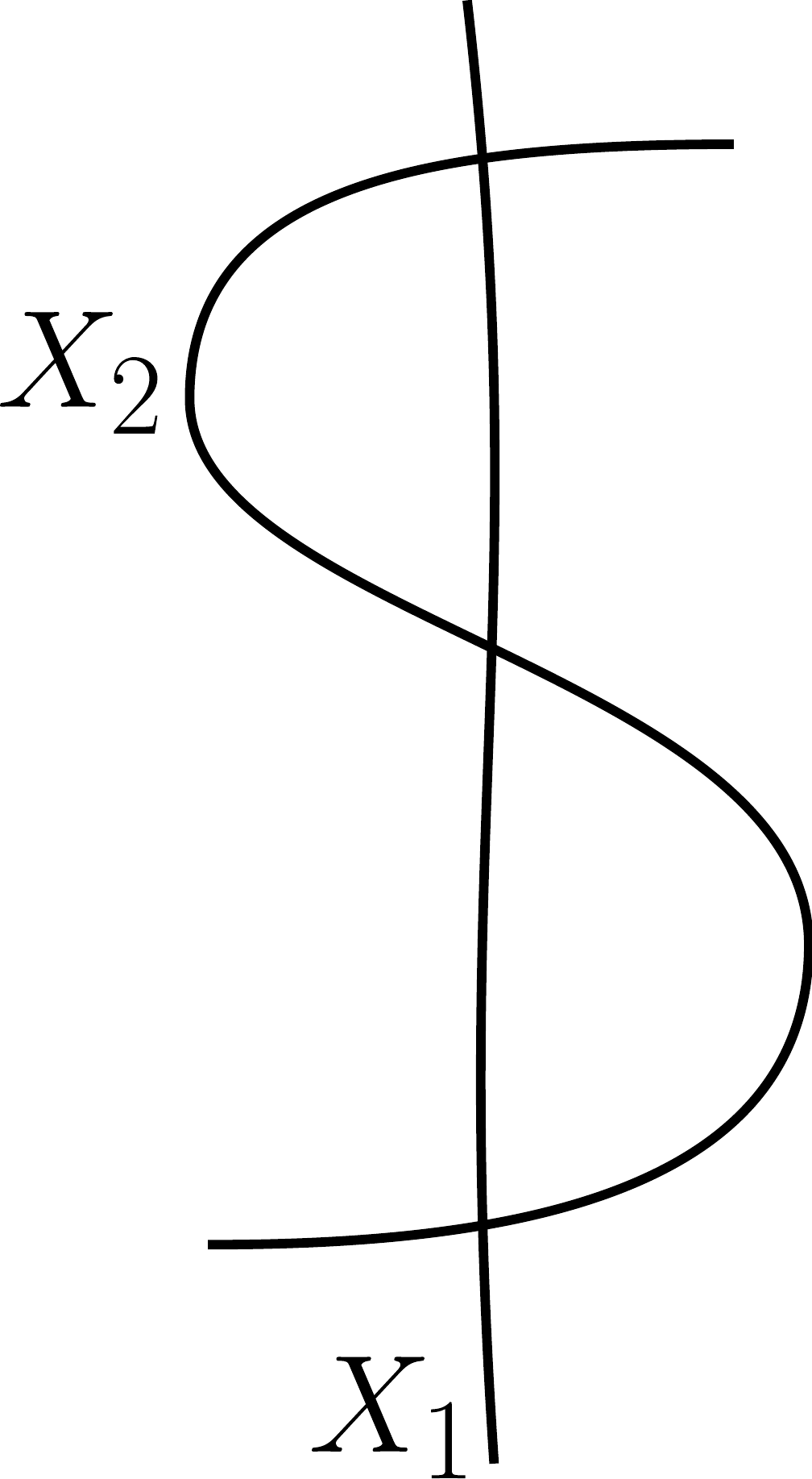}
  \label{fig:Dollard}
\end{subfigure}
\caption{The possible level graphs $\lG_1,\lG_2, \lG_3$  from \Cref{ex:Dollar}} and the resulting stable curve $X$
\label{fig:Dollar}
\end{figure}
Without labeling the legs, there are only three possible level graphs, see \Cref{fig:Dollar}.
The discussion for $\lG_2$ and $\lG_3$ is completely analogous and we thus only discuss $\lG_1$ and $\lG_2$.
We let $\umS_1$ and $\umS_2$ be all marked zeros or poles of $f$ contained in $X_1$ and $X_2$, respectively. Similarly, we let $\umN^+=(q_1^+,q_2^+,q_3^+)$ and $\umN^-=(q_1^-,q_2^-,q_3^-)$ be the preimages of nodes contained in $X_1$ and $X_2$ respectively.
The following condition are necessary for a \twr.

\begin{enumerate}
\item If $\ord_{q_l^\pm}\stf<0$ then $v(q_l^{\pm})$ is of level $-1$.
\item $\ord_{q_l^+} (df_1)+\ord_{q_l^-} (df_2)\geq -2$ for $l=1,2,3$.
\end{enumerate}
We now discuss the vanishing of the evaluation morphism for the different level graphs.

{\em The dual graph $\lG_1$:} In this case both components $X_1$ and $X_2$ are of top level and thus $\ord_{q_l^{\pm}}\stf\geq 0$ for $l=1,2,3$.
This is only possible if $\ord_{q_l^{\pm}} \stf=0$ for $l=1,2,3$.
Thus $f_1$ and $f_2$ have zeros and poles  only at the marked points.
In particular
both components $X_1$ and $X_2$ contain marked zeros of $\stf$.

It remains to compute the evaluation morphism.
Let $\gamma_l$ be a path connecting a marked zero of $X_1$ and a marked zero of $X_2$ through the node $q_l$. Then the first homology $H_1(\lG_1,\umZ)$ of the dual graph relative to the marked zeros of $\stf$  is generated by $\gamma_1,\gamma_2$ and $\gamma_3$.
We then have
\begin{equation*}
\begin{split}
\ev_{\stf}^{(0)}(\gamma_l)&=f_1(q_l^+)-f_2(q_l^-) \text{ for } l=1,2,3\,,
\end{split}
\end{equation*}
and thus the vanishing of the evaluation morphism is equivalent to
\[
f_1(q_l^+)=f_2(q_l^-) \text{ for } l=1,2,3.
\]

Since no preimage of a node $q_l^{\pm}$ is a zero or pole of $\stf$, we can always rescale $f_2$ such that the condition $f_1(q_1^+)=f_2(q_1^-)$ is satisfied. But then the conditions for $l=2,3$ impose non-trivial restrictions.

{\em The dual graph $\lG_2$:} We now consider the second case, where $X_1$ is of top level and $X_2$ of level $-1$.
The main difference is that now potentially one component contains no marked zeros of $\stf$. We thus distinguish two cases.

{\em Case 1: Both components contain a marked zero}\\
In this case $H_1(\lG,\umS)$ is again generated by $\gamma_l,\, l=1,2,3$.
Since $X_2$ is of lower level the evaluation morphism now computes as
\[
\ev_{\stf}^{(0)}(\gamma_l)=f_1(q_l^+) \text{ for } l=1,2,3
\]
and thus the vanishing of  the evaluation morphism is equivalent to $\stf_1(q_l^+)=0$ for $l=1,2,3$. In particular all preimages $q_l^+$ are zeros for $\stf_1$.
On $X_2$ there are no further restrictions.

{\em Case 2: There exists a component without  marked zeros}\\
The evaluation morphism now computes differently, since there is no relative path crossing exactly one node.
We let $\gamma_{ab}$ to be the path $(X_1,q_a,X_2,q_b,X_1)$ be a path crossing only $q_a$ and $q_b$ once. Then $\gamma_{12},\gamma_{13}$ generate $H_1(\lG,\umZ)$ and
\[
\begin{split}
\ev_{\stf}^{(0)}(\gamma_{12})&=f_1(q_1^+)-f_1(q_2^+),\\
\ev_{\stf}^{(0)}(\gamma_{13})&=f_1(q_1^+)-f_1(q_3^+).
\end{split}
\]
Thus the vanishing of the evaluation morphism is equivalent to
\[
f_1(q_1^+)=f_1(q_2^+)=f_1(q_3^+).
\]
\end{example}
Combining our analysis we have shown that

\begin{proposition}
A marked curve $(X,\umS)$ with dual graph $\lG$ lies in the closure of $\DR$ if and only if there exist rational functions $f_k$ on $X_k$ for $k=1,2$ satisfying the following conditions.
\begin{enumerate}
\item $\ord_{\mS}= \ptn_k$  for all $k$.
\item $\ord_{q_l^+} (df_1)+\ord_{q_l^-} (df_2)\geq -2$ for $l=1,2,3$.
 \item Either \begin{gather*}
 f_1(q_1^+)=f_1(q_2^+)=f_1(q_3^+),\, \ord_{q_l^{+}} f_1\geq 0 \textrm{ for } l=1,2,3 \text{ or }\\
 f_2(q_1^-)=f_2(q_2^-)=f_2(q_3^-), \ord_{q_l^{-}} f_2\geq 0 \textrm{ for } l=1,2,3 \text{ or }\\
f_1(q_l^+)=f_2(q_l^-),\, \ord_{q_l^{\pm}} f=0 \textrm{ for } l=1,2,3.
\end{gather*}
\end{enumerate}
\end{proposition}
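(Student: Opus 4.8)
The plan is to obtain the Proposition by combining the \hyperlink{thm:DRclosure}{Main theorem} with the explicit computations of \Cref{ex:Dollar}. By the \hyperlink{thm:DRclosure}{Main theorem}, $(X,\umS)$ lies in $\overline{\DR}$ if and only if the dual graph of $X$ carries a level-graph structure $\lG$ admitting a \twr $(X,\umS,\stf)$ of type $\ptn$ compatible with $\lG$ all of whose evaluation morphisms $\ev_{\stf}^{(i)}$ vanish; so it suffices to show that the existence of such a pair $(\lG,\stf)$ is equivalent to the existence of rational functions $f_1,f_2$ on $X_1,X_2$ satisfying (1)--(3). Since $X$ is a dollar curve, its dual graph has two vertices joined by three edges, and up to relabelling the nodes the only level-graph structures on it are the three already listed in \Cref{ex:Dollar}: $\lG_1$, with both components on top level (all three edges horizontal); $\lG_2$, with $X_1$ on top and $X_2$ one level below; and $\lG_3$, the mirror of $\lG_2$.

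First I would note that conditions (1) and (2) are exactly the conditions on $(f_1,f_2)$ imposed by \Cref{def:twr} that do not involve the level structure --- the prescribed order of vanishing at the marked points, and the matching inequality \eqref{eq:TWR} at each node --- so they are necessary regardless of which of $\lG_1,\lG_2,\lG_3$ is used, and the remaining requirements on $(f_1,f_2)$ are compatibility with the level order, i.e.\ condition $(3)$ of \Cref{def:twr}, together with the vanishing of the evaluation morphism. Next, for each of the three level graphs I would run through the analysis of \Cref{ex:Dollar}: one computes the level filtration of $H_1(\lG_j,\umZ;\ZZ)$, reads off generators of the graded pieces from the relative cycles $\gamma_l$ through a single node and, where needed, $\gamma_{ab}$ through two nodes, and evaluates $\ev_{\stf}$ on them. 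This produces, for $\lG_1$, that compatibility forces $\ord_{q_l^\pm}\stf=0$ and that vanishing of $\ev_{\stf}$ is equivalent to $f_1(q_l^+)=f_2(q_l^-)$ for $l=1,2,3$ (the third alternative in (3)); for $\lG_2$, that compatibility forces $\ord_{q_l^+}\stf\geq 0$ and that vanishing of $\ev_{\stf}$ is equivalent to $f_1(q_1^+)=f_1(q_2^+)=f_1(q_3^+)$ (the first alternative); and symmetrically for $\lG_3$ (the second alternative). For $\lG_2$ and $\lG_3$ one separates the sub-cases according to whether both components carry a marked zero of $\stf$, and checks that both sub-cases collapse to the single displayed condition, the common value being forced to $0$ precisely when both components carry a marked zero.

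Finally I would assemble the three cases and conclude that $(X,\umS)\in\overline{\DR}$ if and only if one of $\lG_1,\lG_2,\lG_3$ supports a \twr with vanishing evaluation morphism, equivalently if and only if rational functions $f_1,f_2$ exist satisfying (1), (2) and one of the three alternatives in (3). For the ``if'' direction one still has to check the converse point: given $f_1,f_2$ satisfying (1), (2) and the $j$-th alternative, the level graph $\lG_j$ really is compatible with $(f_1,f_2)$ in the sense of condition $(3)$ of \Cref{def:twr} --- this is immediate, since the order conditions in each alternative guarantee precisely that whenever a node preimage is a pole of the relevant $f_k$ the opposite component sits strictly higher in $\lG_j$. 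I expect the main obstacle to be the bookkeeping in the middle step: pinning down the level filtrations and their generators, correctly merging the two sub-cases for $\lG_2$ and $\lG_3$, and making sure the three alternatives are genuinely exhaustive, so that no level graph and no distribution of the marked zeros has been missed.
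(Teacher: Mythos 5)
Your plan is essentially the paper's own: the Proposition is stated as the upshot of the dollar-curve analysis in \Cref{ex:Dollar}, and the steps you list --- reduce via the \hyperlink{thm:DRclosure}{Main theorem}, enumerate $\lG_1,\lG_2,\lG_3$, separate out the level-independent conditions (1) and (2), and compute the level-$0$ evaluation morphism case by case --- are exactly what that example carries out.

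The one step that is more than bookkeeping, and that you correctly flag, is the merging of the two sub-cases for $\lG_2$ (and symmetrically $\lG_3$). The paper's Case~1 for $\lG_2$ (both components carry a marked zero) has $\ev_{\stf}^{(0)}(\gamma_l)=f_1(q_l^+)$ on generators $\gamma_l$ running from a marked zero of $X_1$ to one of $X_2$, so vanishing forces $f_1(q_l^+)=0$ for all $l$; Case~2 (one component has no marked zeros) gives only that the three values agree. Alternative~(1) of the Proposition records the weaker ``all equal'' condition. Your assertion that ``the common value is forced to $0$ precisely when both components carry a marked zero'' is exactly what has to be justified for the ``if'' direction, and it does not follow from conditions (1), (2) and agreement of the $f_1(q_l^+)$ alone; be prepared either to show that a nonzero common value cannot arise in that sub-case for any admissible pair $f_1,f_2$, or to read alternative~(1) with the stronger equality ``$=0$'' in that sub-case, matching what the example itself derives.
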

We see that in this example we get a short list of conditions that characterize whether a curve lies in the closure of a \dr. On each irreducible component one has a rational function  contained in a double ramification locus, where potentially we have some additionally marked points that are not zeros or poles of $\stf$, together with some compatibility conditions between $f_1$ and $f_2$.    All the conditions either restrict the ramification profile on one  irreducible components or state that some marked points on different components are mapped to the same point on $\PP^1$.

\clearpage


\begin{thebibliography}{BCGGM19a}

\bibitem[AVC03]{AVC}
\newblock D. Abramovich , A. Corti and A. Vistoli \newblock {\em Twisted Bundles and Admissible Covers}, \newblock Communications in Algebra,31.8(2003), pp. 3547-3618

\bibitem[AP19]{APDRToric}
A. Abreu and M. Pacini,
\newblock {\em The resolution of the universal Abel map via tropical geometry and applications},
\newblock Preprint \href{https://arxiv.org/abs/1903.08569}{arxiv:1903.08569}, 2019.


\bibitem[BCGGM18]{BCGGMgrc}
M. Bainbridge, D. Chen, Q. Gendron, S. Grushevsky and M.
  M{\"o}ller.
\newblock {\em Compactification of strata of abelian differentials}.
\newblock Duke Math. J.  167.12 (2018), pp. 2347--2416.

\bibitem[BCGGM19a]{BCGGMk}
M. Bainbridge, D. Chen, Q. Gendron, S. Grushevsky  and M. M{\"o}ller.
\newblock {\em Strata of k-differentials}.
\newblock Algebr. Geom. 6.2 (2019), pp. 196–233.

\bibitem[BCGGM19b]{BCGGMsm}
M. Bainbridge, D. Chen, Q. Gendron, S. Grushevsky and M. M{\"o}ller.
\newblock {\em The moduli space of multi-scale differentials}.
\newblock Preprint \href{https://arxiv.org/abs/1910.13492}{arXiv:1910.13492}, 2019.

\bibitem[Ben20]{Fred}
F. Benirschke.
 \newblock {\em The boundary of linear subvarieties in strata of differentials}
\newblock Preprint \href{https://arxiv.org/abs/2007.02502}{arXiv:2007.02502}, 2020.

\bibitem[BDG20]{BDG}
F. Benirschke, B. Dozier and S. Grushevsky.
\newblock {\em The equations of linear subvarieties of strata of differentials}
\newblock Preprint \href{https://arxiv.org/abs/2011.11664}{arXiv:2011.11664}

\bibitem[Che17]{ChenSurvey}
D. Chen.
\newblock  {\em Teichm{\"u}ller dynamics in the eyes of an algebraic geometer}.
\newblock Surveys on recent developments in algebraic geometry, Volume 95 of Proc. Sympos. Pure Math. (2017), pp. 171–197, Amer. Math.Soc., Providence, RI

\bibitem[CMZ20a]{ecStrata}
M. Costantini, M. M{\"o}ller and J. Zachhuber.
\newblock {\em
The {C}hern classes and the {E}uler characteristic
of the moduli space of {A}belian differentials}
\newblock Preprint. \href{https://arxiv.org/abs/2006.12803}{arXiv:2006.12803}, 2020

\bibitem[CMZ20b]{diffstrata}
M. Costantini, M. M{\"o}ller and J. Zachhuber.
\newblock {\em
diffstrata -- a Sage package for calculations in the tautological ring of the moduli space of Abelian differentials.}
\newblock Preprint. \href{https://arxiv.org/abs/2006.12815}{arXiv:2006.12815}, 2020


\bibitem[EGH00]{SymplField}
Y. Eliashberg, A. Giventhal and H. Hofer.
\newblock {\em Introduction to Symplectic Field Theory}. \newblock Geom. Funct. Anal. 2000, Special Volume, Part II, pp. 560–673.

\bibitem[FP18]{FarkasPandharipande}
G. Farkas and R. Pandharipande.
\newblock {\em The moduli space of twisted canonical divisors.}
\newblock Journal of the Institute of Mathematics of Jussieu, 17.3 (2018), pp. 615-672.

\bibitem[Gen15]{QuentinThesis}
Q. Gendron.
\newblock {\em The {D}eligne-{M}umford and the Incidence Variety Compactifications of the Strata of $\Omega\mathcal{M}_{g}$}.
\newblock  Annales de l’institut Fourier. 68(2015).

\bibitem[GV05]{GV} T.  Graber  and  R.  Vakil,
\newblock {\em Relative  virtual  localization  and  vanishing  of tautological  classes  on  moduli  spaces  of  curves},
\newblock   Duke  Math. J.130.1(2005), pp. 1–37.


\bibitem[GZ14]{GZ}
S. Grushevsky and D. Zakharov.
\newblock {\em
 The zero section of the universal semiabelian variety and the double ramification cycle}.
 \newblock  Duke Math. J. 163.5 (2014), pp. 953--982.

 \bibitem[Hai13]{Hain}
 R. Hain,
 \newblock {\em Normal functions and the geometry of moduli spaces of curves}.
 \newblock Handbook of moduli. Vol. I, Adv. Lect. Math. (ALM), 24 ,Int. Press, Somerville, MA, 2013, pp. 527–578

 \bibitem[HM82]{HM}
Harris, J. and  Mumford, D.
\newblock {\em On the Kodaira dimension of the moduli space of curves}.
\newblock Invent. Math. 67(1982), pp. 23–86.

\bibitem[HKP18]{HKPDR}
D. Holmes, J. Kass, and N. Pagani.
\newblock {\em  Extending the double ramification cycle using Jacobians}.
\newblock Eur. J. Math., 4.3 (2018), pp. 1087–1099.

\bibitem[Hol19]{HolmesDR}
D. Holmes,
\newblock {\em Extending  the  double  ramification  cycle by  resolving  the  {A}bel-{J}acobi  map}.
\newblock J.  Inst.  Math.  Jussieu (2019), pp. 1-29.

\bibitem[JRPZ17]{DRFormula} F.  Janda,  R.  Pandharipande,  A.  Pixton and  D.  Zvonkine,
\newblock {\em Double  ramification  cycles  on  the  moduli  spaces  of  curves},
\newblock Publications math{\'e}matiques de l’{IH{\'E}S} 125.1(2017), pp. 221–266

\bibitem[JKK05]{JKK}
T. Jarvis, R. Kaufmann, and T. Kimura,
\newblock {\em  Pointed admissible G-covers and G-equivariant cohomological field theories.} Compositio Mathematica, 141.4 (2005), pp. 926-978

\bibitem[Li01]{Li1} J. Li.
\newblock {\em Stable morphisms to singular schemes and relative stable morphisms},
\newblock J. Differential Geom. 57.3(2001), pp. 509–578.

\bibitem[Li02]{Li2} J. Li.
\newblock {\em A degeneration formula of GW-invariants},
\newblock J. Differential Geom. 60.2(2002), pp. 199–293.

\bibitem[Lia20]{LianHtautological}
C. Lian \newblock {\em  The {$\calH$}-tautological ring }
\newblock Preprint \href{arXiv:2011.11565}{https://arxiv.org/pdf/2011.11565.pdf}

\bibitem[MW17]{DRLog}
S. Marcus and J. Wise.
\newblock {\em Logarithmic compactification of the Abel-Jacobi section}.
\newblock  Proceedings of the London Mathematical Society. 121 (2017).

\bibitem[MUW17]{MUWReal}
M. M{\"o}ller, M. Ulirsch and A. Werner,
\newblock {\em Realizability of tropical canonical divisors},
\newblock Preprint \href{arXiv:1710.06401}{1710.06401}, 2017.

\bibitem[PP06]{PPHurwitz}
 E. Pervova and C. Petronio.
 \newblock {\em On the existence of branched coverings between surfaces with prescribed branch data I},
 \newblock I. Algebr. Geom. Topol. 6.4(2006), pp. 1957-1985.
 
 
 \bibitem[SvZ20]{SchmittvanZelm}
 J. Schmitt and  J. van Zelm, 
 \newblock {\em Intersections of loci of admissible covers with tautological classes.} Sel. Math. New Ser. 26.79 (2020). 

\bibitem[Wri15]{WrightSurvey}
A. Wright.
\newblock {\em  Translation surfaces and their orbit closures: an introduction for a broad audience}.
\newblock EMS Surv. Math. Sci., 2.1(2015), pp. 63–108


\bibitem[Zor06]{ZorichSurvey}
A. Zorich.
\newblock {\em Flat surfaces}.
\newblock Frontiers in number theory, physics, and geometry. I(2006), pp. 437–583

\end{thebibliography}
\end{document}